\newtheorem{letterthm}{Theorem}
\newtheorem{lettercor}[letterthm]{Corollary}
\newtheorem{thm}{Theorem}[section]
\newtheorem{lem}[thm]{Lemma}
\newtheorem{prop}[thm]{Proposition}
\theoremstyle{definition}
\newtheorem{claim}{Claim}
\newtheorem*{newclaim}{Claim}
\newcommand{\R}{\mathbf{R}}
\newcommand{\C}{\mathbf{C}}
\newcommand{\F}{\mathbf{F}}
\newcommand{\N}{\mathbf{N}}
\newcommand{\rL}{\mathord{\text{\rm L}}}
\newcommand{\Ball}{\mathord{\text{\rm Ball}}}
\newcommand{\spn}{\mathord{\text{\rm span}}}
\begin{document}

\title[Gamma stability in free product von Neumann algebras]{Gamma stability in free product von Neumann algebras}

\begin{abstract}
Let $(M, \varphi) = (M_1, \varphi_1) \ast (M_2, \varphi_2)$ be a free product of arbitrary von Neumann algebras endowed with faithful normal states. Assume that the centralizer $M_1^{\varphi_1}$ is diffuse. We first show that any intermediate subalgebra $M_1 \subset Q \subset M$ which has nontrivial central sequences in $M$ is necessarily equal to $M_1$. Then we obtain a general structural result for all the  intermediate subalgebras $M_1 \subset Q \subset M$ with expectation. We deduce that any diffuse amenable von Neumann algebra can be concretely realized as a maximal amenable subalgebra with expectation inside a full nonamenable type ${\rm III_1}$ factor. This provides the first class of concrete maximal amenable subalgebras in the framework of type ${\rm III}$ factors. We finally strengthen all these results in the case of tracial free product von Neumann algebras. 
\end{abstract}

\author{Cyril Houdayer}
\address{CNRS - Universit\'e Paris-Est - Marne-la-Vall\'ee \\
    LAMA UMR 8050 \\ 77454 Marne-la-Vall\'ee cedex~2 
\\ France}
\email{cyril.houdayer@u-pem.fr}

\thanks{Research supported by ANR grant NEUMANN}

\subjclass[2010]{46L10; 46L54}

\keywords{Free product von Neumann algebras; Ultraproduct von Neumann algebras; Amenable von Neumann algebras; Type ${\rm III}$ factors; Property Gamma; Asymptotic orthogonality property}

\maketitle

\section{Introduction and statement of the main results}

A von Neumann algebra $M \subset \mathbf B(H)$ (with separable predual) is {\em amenable} if there exists a norm one projection $E : \mathbf B(H) \to M$. By Connes' celebrated result \cite{Co75b}, all the amenable von Neumann algebras are {\em hyperfinite}. Moreover, the amenable or hyperfinite factors are completely classified by their flows of weights (see \cite{Co72, Co75b, Co85, Ha84}). In particular, there is a unique amenable ${\rm II_1}$ factor \cite{Co75b}: it is the hyperfinite ${\rm II_1}$ factor of Murray and von Neumann \cite{MvN43}.

Since the amenable von Neumann algebras form a monotone class, any von Neumann algebra admits maximal amenable subalgebras. The first concrete examples of maximal amenable subalgebras inside ${\rm II_1}$ factors were obtained by Popa in \cite{Po83}. He showed that any {\em generator} masa $A$ in a free group factor $\rL(\F_n)$ with $n \geq 2$ is maximal amenable. This result answered in the negative a question raised by Kadison. Indeed, $A \subset \rL(\F_n)$ is an abelian subalgebra generated by a selfadjoint operator and yet there is no intermediate hyperfinite subfactor in $\rL(\F_n)$ which contains $A$ as a subalgebra. Popa discovered in \cite{Po83} a powerful method to prove that a given amenable subalgebra is maximal amenable inside an ambient ${\rm II_1}$ factor. Using this strategy for the generator masa $A \subset \rL(\F_n)$, he first showed that $A$ satisfies a certain {\em asymptotic orthogonality property} and then deduced that $A$ is maximal amenable in $\rL(\F_n)$ using various {\em mixing} techniques. His results actually showed that the generator masa $A$ is maximal Gamma inside $\rL(\F_n)$. Recall that a ${\rm II_1}$ factor $M$ (with separable predual) has {\em property Gamma} of Murray and von Neumann \cite{MvN43} if there exists a sequence of unitaries $u_n \in \mathcal U(M)$ such that $\lim_{n \to \infty} \tau(u_n) = 0$ and $\lim_{n \to \infty} \|x u_n - u_n x\|_2 = 0$ for all $x \in M$. 

Subsequently, Cameron, Fang, Ravichandran and White proved in \cite{CFRW08} that the {\em radial} masa in a free group factor $\rL(\F_n)$ with $2 \leq n < \infty$ is maximal amenable. Recently, the author vastly generalized in \cite{Ho12a, Ho12b} Popa's results from \cite{Po83} and obtained many new examples of maximal amenable subalgebras inside the crossed product ${\rm II_1}$ factors associated with free Bogoljubov actions of amenable groups. Very recently, Boutonnet and Carderi showed in \cite{BC13} that any infinite maximal amenable subgroup $\Lambda$ in a Gromov word-hyperbolic group $\Gamma$ gives rise to a maximal amenable subalgebra $\rL(\Lambda)$ inside the group von Neumann algebra $\rL(\Gamma)$. For other related results regarding maximal amenability in the framework of ${\rm II_1}$ factors, we refer the reader to \cite{Br12, Fa06, Ga09, Ge95, Jo10, Po13, Sh05}.

In this paper, we obtain new results regarding maximal amenability and Gamma stability for subalgebras of free products of {\em arbitrary} von Neumann algebras. We will be particularly interested in the structure of free product type ${\rm III}$ factors. Before stating our main results, we first introduce some terminology. Recall that a von Neumann algebra $M$ is {\em diffuse} if $M$ has no minimal projection. We say that a von Neumann subalgebra $Q \subset M$ is {\em with expectation} if there exists a faithful normal conditional expectation $E_Q : M \to Q$. Let now $\omega \in \beta(\N) \setminus \N$ be a non-principal ultrafilter. We say that a von Neumann algebra $M$ has {\em property Gamma} if the central sequence algebra $M' \cap M^\omega$ is diffuse. Observe that in the case when $M$ is a ${\rm II_1}$ factor with separable predual, this definition is equivalent to the property Gamma of Murray and von Neumann \cite{MvN43} (see e.g.\ \cite[Corollary 3.8]{Co74}).

Our first main result deals with Gamma stability inside arbitrary free product von Neumann algebras $(M, \varphi) = (M_1, \varphi_1) \ast (M_2, \varphi_2)$. We show in Theorem \ref{thmA} that the subalgebra $M_1 \subset M$ sits in a very rigid position with respect to taking central sequences inside $M$.

\begin{letterthm}\label{thmA}
Let $(M_1, \varphi_1)$ and $(M_2, \varphi_2)$ be $\sigma$-finite von Neumann algebras endowed with faithful normal states. Assume that the centralizer $M_1^{\varphi_1}$ is diffuse. Denote by $(M, \varphi) = (M_1, \varphi_1) \ast (M_2, \varphi_2)$ the free product von Neumann algebra. 

Then the inclusion $M_1 \subset M$ is {\em Gamma stable} in the following sense:  for every intermediate von Neumann subalgebra $M_1 \subset Q \subset M$ such that $Q' \cap M^\omega$ is diffuse, we have $Q = M_1$.
\end{letterthm}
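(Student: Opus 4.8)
\emph{Reduction.} Suppose toward a contradiction that $M_1 \subsetneq Q$. The free product construction provides a $\varphi$-preserving conditional expectation $E_{M_1} : M \to M_1$ restricting to the identity on $M_1$, so there is $q \in Q$ with $E_{M_1}(q) \neq q$; replacing $q$ by $q - E_{M_1}(q) \in Q$ and rescaling, we obtain $x \in Q$ with $E_{M_1}(x) = 0$, $\|x\| \le 1$ and $x \neq 0$. The whole proof consists in deriving a contradiction with $x \neq 0$.

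\emph{The key step: asymptotic orthogonality.} The heart of the matter is the \emph{asymptotic orthogonality property} of the inclusion $M_1 \subseteq M$, in the spirit of Popa \cite{Po83}. Since $M_1^{\varphi_1}$ is diffuse it contains a Haar unitary $v \in \mathcal U(M_1^{\varphi_1})$, which lies in the centralizer of $\varphi$ (because $M_1^{\varphi_1} \subseteq M^{\varphi}$) and whose conjugation action mixes $L^2(M) \ominus L^2(M_1)$ by freeness. Exploiting this, one proves that every central sequence of $M$ commuting with $M_1$ already comes from $M_1$, that is,
\[
M_1' \cap M^\omega \;=\; M_1' \cap M_1^\omega .
\]
Concretely: given $w = (w_n)_\omega \in M_1' \cap M^\omega$, write $w_n = E_{M_1}(w_n) + w_n^\circ$ with $w_n^\circ \in M \ominus M_1$; a nonzero reduced-word component of $w_n^\circ$ produces, via the combinatorics of reduced words in $(M_1,\varphi_1) \ast (M_2,\varphi_2)$, a definite lower bound for $\|[w_n^\circ, c]\|_\varphi$ for a suitable $c \in \mathcal U(M_1^{\varphi_1})$ (a large power of $v$ being typical), since freeness kills the relevant cross-terms and powers of a Haar unitary cannot be absorbed into a fixed reduced word; letting $n \to \omega$ forces $w^\circ = 0$.

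\emph{A companion computation and the contradiction.} An easier reduced-word estimate, again built on freeness, shows that a central sequence of $M_1$ cannot commute with a nonzero element $y \in M \ominus M_1$ unless it is a scalar:
\[
( M_1' \cap M_1^\omega ) \cap \{ y \}' \;=\; \C 1 \qquad \text{whenever } y \in M \ominus M_1,\ y \neq 0 .
\]
Indeed, if $a = (a_n)_\omega \in M_1' \cap M_1^\omega$ commutes with $y$, then $\|[a_n, y]\|_\varphi^2 \ge c\,\|a_n - \varphi_1(a_n) 1\|_{\varphi_1}^2$ for some constant $c = c(y) > 0$ — after, if necessary, replacing $y$ by $v' y$ for a unitary $v' \in \mathcal U(M_1)$ chosen so that $v' y$ has a reduced-word component beginning with a letter from $M_2$, which is harmless since $a$ commutes with $v' \in M_1$ — so $a_n \to_\omega \varphi^\omega(a) 1$. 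We now conclude. Because $M_1 \subseteq Q$, the previous two paragraphs give
\[
Q' \cap M^\omega \;\subseteq\; M_1' \cap M^\omega \;=\; M_1' \cap M_1^\omega ,
\]
and every element of $Q' \cap M^\omega$ commutes with $x \in Q \ominus M_1$; hence $Q' \cap M^\omega \subseteq ( M_1' \cap M_1^\omega ) \cap \{ x \}' = \C 1$. Thus $Q' \cap M^\omega = \C 1$, which is not diffuse, contradicting the hypothesis. Therefore no such $x$ exists and $Q = M_1$.

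\emph{Where the difficulty lies.} The only genuinely hard ingredient is the asymptotic orthogonality property, i.e.\ the identity $M_1' \cap M^\omega = M_1' \cap M_1^\omega$. Two features make the free-product setting more delicate than Popa's original masa case: the state $\varphi$ is not tracial, so the reduced-word estimates carry modular corrections coming from $M_2$ that must be controlled; and one must pass from a quantitative inequality for fixed elements to a statement inside the ultraproduct, which amounts to legitimizing the interchange of the ultrafilter limit with the mixing produced by high powers of the Haar unitary $v$ — precisely the interchange that the asymptotic orthogonality property is designed to make rigorous. The hypothesis that $M_1^{\varphi_1}$ is diffuse enters exactly here, supplying enough unitaries in $M_1^{\varphi_1}$ (with $v$ a distinguished mixing one) to rule out any surviving ``off-$M_1$'' reduced-word component in a sequence that asymptotically commutes with $M_1$.
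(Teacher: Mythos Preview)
Your overall architecture is reasonable and mirrors the paper's in spirit, but both of your two key steps have genuine gaps.

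\textbf{Step 2 is false as stated.} You claim that $(M_1' \cap M_1^\omega) \cap \{y\}' = \C 1$ for every nonzero $y \in M \ominus M_1$, with the justification $\|[a_n,y]\|_\varphi^2 \geq c(y)\,\|a_n - \varphi_1(a_n)1\|_{\varphi_1}^2$. This fails whenever $\mathcal Z(M_1) \neq \C$. Take a nontrivial projection $p \in \mathcal Z(M_1)$ and any nonzero $y \in pMp \ominus M_1 p \subset M \ominus M_1$ (such $y$ exist since $pMp$ is again a free product with first factor $M_1 p$). Then the constant sequence $p$ lies in $M_1' \cap M_1^\omega$ and satisfies $[p,y]=0$, yet $p \notin \C 1$; your inequality reads $0 \geq c(y)\,\|p - \varphi_1(p)1\|_{\varphi_1}^2 > 0$. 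The proposed fix of replacing $y$ by $v' y$ with $v' \in \mathcal U(M_1)$ does not help, since $p$ is central in $M_1$ and still commutes with $v' y$. Consequently your conclusion $Q' \cap M^\omega = \C 1$ is simply too strong: the theorem only asserts that $Q' \cap M^\omega$ is \emph{not diffuse} when $Q \supsetneq M_1$, and indeed Corollary~\ref{corB} shows that in general one only gets triviality of the relative commutant after cutting by suitable central projections of $M_1$.

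\textbf{Step 1 is not what the asymptotic orthogonality property gives.} The identity $M_1' \cap M^\omega = M_1' \cap M_1^\omega$ is a strictly stronger statement than the asymptotic orthogonality established in the paper (Theorem~\ref{key-thm}), and your sketch does not prove it. The paper's Theorem~\ref{key-thm} says that for $x \in \{u\}' \cap M^\omega$ and $y \in M \ominus M_1$ the three pieces $y(x - E_{M_1^\omega}(x))$, $(x - E_{M_1^\omega}(x))y$, $[y, E_{M_1^\omega}(x)]$ are pairwise orthogonal. This only yields $x \in M_1^\omega$ \emph{once you know} $x$ commutes with enough elements $y \in M \ominus M_1$ --- exactly what membership in $M_1'$ does \emph{not} provide. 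Your heuristic (``a nonzero reduced-word component of $w_n^\circ$ produces a definite lower bound for $\|[w_n^\circ,c]\|_\varphi$'') breaks down because the suitable power of the Haar unitary would have to depend on $n$, destroying uniformity; this is precisely the difficulty that the $\varepsilon$-orthogonality machinery in the proof of Theorem~\ref{key-thm} is designed to handle, and even then it only controls the $\mathcal K_1,\mathcal K_2$ components of $w_n^\circ$, not the component in $\mathcal L$ (words beginning and ending in $M_1 \ominus \C$).

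\textbf{How the paper actually proceeds.} The paper does not attempt your Step~1. Instead it works under a maximal projection $z \in \mathcal Z(Q' \cap M)$ with $M_1 z = Q z$, sets $q = z^\perp$, and uses commutation of $w \in (Qq)' \cap (qMq)^\omega$ with elements $y \in Qq \ominus M_1 q$ together with Theorem~\ref{key-thm} and an ideal argument to obtain $(Qq)' \cap (qMq)^\omega \subset (M_1 q)^\omega$. The second step is then \emph{not} a commutant computation of your type, but a mixing argument: the diffuseness hypothesis produces a net of unitaries in $(Qq)' \cap (M_1 q)^\omega$ going weakly to $0$, and a freeness calculation with this net forces $E_{M_1 q}(y^* y) = 0$ for every $y \in Qq \ominus M_1 q$, hence $Qq = M_1 q$, contradicting maximality of $z$. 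This second step is where the diffuseness of $Q' \cap M^\omega$ is genuinely exploited, and it is what your argument is missing.
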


It is worth noticing that in the statement of Theorem \ref{thmA}, the intermediate subalgebra $M_1 \subset Q \subset M$ is not assumed {\em a priori} to be with expectation in $M$. The proof of Theorem \ref{thmA} is based on a key result (see Theorem \ref{key-thm}) which is a generalization of Popa's result \cite[Lemma 2.1]{Po83} regarding asymptotic orthogonality for free group factors to arbitrary free product von Neumann algebras. The proof uses Popa's original method together with $\varepsilon$-orthogonality techniques from \cite{Ho12a, Ho12b}.

In order to obtain structural results for the intermediate subalgebras $M_1 \subset Q \subset M$, we will next assume that $Q$ is with expectation in $M$ in the statement of Corollary \ref{corB}. Recall that a factor $M$ (with separable predual) is {\em full} if its asymptotic centralizer $M_\omega$ is trivial (see \cite{Co74}). Observe that by \cite[Theorem 5.3]{AH12}, this is equivalent to $M' \cap M^\omega = \C$.

\begin{lettercor}\label{corB}
Let $(M_1, \varphi_1)$ and $(M_2, \varphi_2)$ be von Neumann algebras with separable predual endowed with faithful normal states. Assume that the centralizer $M_1^{\varphi_1}$ is diffuse. Denote by $(M, \varphi) = (M_1, \varphi_1) \ast (M_2, \varphi_2)$ the free product von Neumann algebra. 

Then any intermediate von Neumann subalgebra $M_1 \subset Q \subset M$ with faithful normal conditional expectation $E_Q : M \to Q$ is globally invariant under the modular automorphism group $(\sigma_t^\varphi)$. Moreover, there exists a sequence of pairwise orthogonal projections $z_n \in Q' \cap M \subset \mathcal Z(M_1)$ such that $\sum_{n} z_n = 1$ and
\begin{itemize}
\item $M_1z_0 = Qz_0$ and
\item $Q z_n$ is a full nonamenable factor such that $(Qz_n)' \cap (z_n Mz_n)^\omega = \C z_n$ for every $n \geq 1$.
\end{itemize}
\end{lettercor}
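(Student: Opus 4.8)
The plan is to prove Corollary~\ref{corB} in four movements: (i) modular invariance of $Q$; (ii) the construction of $z_0$; (iii) pure atomicity of $\mathcal Z(Q)$ off $z_0$; (iv) that each resulting corner $Qz_n$ ($n\ge 1$) is a full nonamenable factor with trivial central sequence algebra. Movement (iv) is where Theorem~\ref{key-thm} does the real work, and movement (iii) is the other substantial point; (i) and (ii) are structural reductions. Throughout, $E_{M_1}\colon M\to M_1$ denotes the canonical $\varphi$-preserving expectation, so $\sigma^\varphi_t|_{M_1}=\sigma^{\varphi_1}_t$, and I will use the free-product relative commutant formula $M_1'\cap M=\mathcal Z(M_1)$, valid because $M_1^{\varphi_1}$ is diffuse. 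Since $\mathcal Z(M_1)\subset M_1\subset Q$, this already gives $Q'\cap M=\mathcal Z(Q)$ with $\mathcal Z(Q)\subset\mathcal Z(M_1)\subset M_1^{\varphi_1}$. For modular invariance I would set $\psi:=\varphi\circ E_Q$, note $\psi|_Q=\varphi|_Q$, and observe that composing the $\psi$-preserving expectation $E_Q$ with the $\psi|_Q$-preserving expectation $E_{M_1}|_Q\colon Q\to M_1$ produces a $\psi$-preserving expectation $M\to M_1$; hence $M_1$ is globally $\sigma^\psi$-invariant with $\sigma^\psi_t|_{M_1}=\sigma^{\varphi_1}_t=\sigma^\varphi_t|_{M_1}$, so $u_t:=[D\psi:D\varphi]_t$ commutes with $M_1$, i.e. $u_t\in M_1'\cap M=\mathcal Z(M_1)\subset Q$. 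Since $Q$ is $\sigma^\psi$-invariant this yields $\sigma^\varphi_t(Q)=u_t^*\,\sigma^\psi_t(Q)\,u_t=Q$, and by Takesaki's theorem one may then assume $E_Q$ itself is $\varphi$-preserving. Next I would let $z_0\in\mathcal Z(Q)$ be the supremum of all projections $z\in\mathcal Z(Q)$ with $M_1z=Qz$; since $\mathcal Z(Q)$ is abelian and each $M_1z$ is $\sigma$-weakly closed, this family is closed under suprema, so $M_1z_0=Qz_0$, and by maximality $M_1p\subsetneq Qp$ for every nonzero projection $p\le 1-z_0$ in $\mathcal Z(Q)$.

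The heart of (iii) is that $\mathcal Z(Q)(1-z_0)$ is purely atomic, which I would prove by contradiction. If not, there is a nonzero projection $q\le 1-z_0$ in $\mathcal Z(Q)$ with $\mathcal Z(Q)q$ diffuse; this is a diffuse abelian subalgebra of $M_1^{\varphi_1}q$, and since $M_1^{\varphi_1}(1-q)$ is again diffuse I can enlarge it to a unital diffuse abelian subalgebra $B\subset M_1^{\varphi_1}$ with $Bq=\mathcal Z(Q)q$. For $x\in Qq$, write $x=E_{M_1}(x)+x_0$ with $E_{M_1}(x)\in M_1q$ and $E_{M_1}(x_0)=0$; then $x_0\in Qq$ commutes with $\mathcal Z(Q)q=Bq$, and since $x_0$ is supported under $q$ it commutes with all of $B$. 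As $B\subset M_1$ is diffuse with expectation, the free-product relative commutant property gives $B'\cap M\subset M_1$, so $x_0\in M_1$ and hence $x_0=E_{M_1}(x_0)=0$. Thus $Qq=M_1q$, contradicting $q\le 1-z_0$, $q\neq0$. Therefore $\mathcal Z(Q)(1-z_0)$ is atomic; its at most countably many atoms $(z_n)_{n\ge1}$ then satisfy $\sum_{n\ge0}z_n=1$, $z_n\in Q'\cap M\subset\mathcal Z(M_1)$, each $Qz_n$ is a factor, and $M_1z_n\subsetneq Qz_n$ for $n\ge1$.

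For (iv), fix $n\ge1$. Since $z_n\in\mathcal Z(Q)$ one has $(Qz_n)'\cap(z_nMz_n)^\omega=z_n(Q'\cap M^\omega)z_n$, and the decisive claim is that this equals $\C z_n$. I would prove it by re-running Popa's argument inside the compression $z_nMz_n$: a nonzero element of $z_n(Q'\cap M^\omega)z_n$ orthogonal to $\C z_n$ would give an $M_1z_n$-central sequence in $z_nMz_n$, which—because $z_n\in M_1^{\varphi_1}$, so left and right multiplication by $z_n$ is compatible with $\varphi$ and with $E_{M_1}$—is automatically $M_1$-central in $M$, has mean zero over $M_1$, and commutes with some nonzero element of $Qz_n\ominus M_1z_n$; this contradicts the asymptotic orthogonality property of Theorem~\ref{key-thm}, exactly as in the proof of Theorem~\ref{thmA}. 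Granting $(Qz_n)'\cap(z_nMz_n)^\omega=\C z_n$, the faithful normal expectation $z_nMz_n\to Qz_n$ induced by $E_Q$ gives $(Qz_n)'\cap(Qz_n)^\omega\subset(Qz_n)'\cap(z_nMz_n)^\omega=\C z_n$, so $Qz_n$ is full by \cite[Theorem~5.3]{AH12}. Finally $Qz_n$ contains the diffuse algebra $M_1z_n$ ($M_1$ being diffuse since $M_1^{\varphi_1}$ is), so it is a diffuse factor; if it were amenable it would be hyperfinite by \cite{Co75b}, hence tensorially absorb the hyperfinite ${\rm II}_1$ factor and so have a nontrivial central sequence algebra—contradicting fullness. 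Thus $Qz_n$ is a full nonamenable factor, and all the stated properties follow.

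I expect the main obstacle to be precisely the identity $(Qz_n)'\cap(z_nMz_n)^\omega=\C z_n$: one cannot invoke Theorem~\ref{thmA} as a black box here, because a direct-sum decomposition $Qz_n\oplus(\text{anything on }1-z_n)$ immediately destroys diffuseness of the ambient central sequence algebra and so fails the hypothesis of Theorem~\ref{thmA}. One must instead push the full asymptotic orthogonality machinery of Theorem~\ref{key-thm} through the compression $z_nMz_n$, the crucial (and fortunately cheap) observation being that $M_1z_n$-centrality in $z_nMz_n$ upgrades to $M_1$-centrality in $M$. The other substantial ingredient is the atomicity step, which rests entirely on the free-product relative commutant formula $B'\cap M\subset M_1$ for diffuse $B\subset M_1$ with expectation; the modular-invariance step is a routine Connes cocycle computation, and fullness/nonamenability of the pieces are immediate once the central sequence algebra has been pinned down.
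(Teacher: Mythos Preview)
Your steps (i)--(iii) are fine and in places tidier than the paper's: your modular-invariance argument via the Connes cocycle works (the paper instead observes that $E_{M_1}$ is the \emph{unique} normal expectation onto $M_1$ since $M_1'\cap M\subset M_1$, whence $E_{M_1}\circ E_Q=E_{M_1}$ and $E_Q$ is $\varphi$-preserving), and your atomicity argument in (iii) via $B'\cap M\subset M_1$ for diffuse $B\subset M_1^{\varphi_1}$ is a legitimate shortcut for the diffuse part of $\mathcal Z(Q)$.

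The genuine gap is in step (iv). Your claim that a nonzero $a\in (Qz_n)'\cap (z_nMz_n)^\omega$ orthogonal to $\C z_n$ ``has mean zero over $M_1$'' is false: $\varphi^\omega_{z_n}(a)=0$ does \emph{not} give $E_{M_1^\omega}(a)=0$, since $a$ could perfectly well lie in $(M_1z_n)^\omega\ominus\C z_n$. What the asymptotic orthogonality of Theorem~\ref{key-thm} actually buys you (via the ideal argument in the proof of Theorem~\ref{thmA}) is only the inclusion
\[
(Qz_n)'\cap (z_nMz_n)^\omega \;\subset\; (M_1z_n)^\omega,
\]
not triviality. The second half of Theorem~\ref{thmA}'s proof---the mixing/spreading argument that forces $Qz_n=M_1z_n$---requires $(Qz_n)'\cap(M_1z_n)^\omega$ to be \emph{diffuse} in order to produce the asymptotically vanishing net $(w_i)$; nothing in your setup guarantees this, and indeed for $n\ge 1$ it is precisely the failure of diffuseness that you want.

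What is missing is a dichotomy: for the irreducible inclusion with expectation $Qz_n\subset z_nMz_n$, the relative commutant $(Qz_n)'\cap(z_nMz_n)^\omega$ is either $\C z_n$ or diffuse. This is the content of the paper's Proposition~\ref{gamma}, proved by a reindexing argument in the spirit of Connes--Ioana. Once you have it, the logic closes: if diffuse, Theorem~\ref{thmA} (applied in the compressed free product $z_nMz_n$, via \cite[Lemma~2.2]{Ue11}) forces $Qz_n=M_1z_n$, contradicting $z_n\le 1-z_0$; hence it is $\C z_n$. The paper organizes this slightly differently---it first separates the atoms of $\mathcal Z(Q)$, applies Proposition~\ref{gamma} to each, collects those with diffuse relative commutant together with the diffuse part of $\mathcal Z(Q)$ into $z_0$, and runs Theorem~\ref{thmA} once on that aggregate---but the essential missing ingredient in your argument is the same: Proposition~\ref{gamma}.
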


Corollary \ref{corB} generalizes and strengthens \cite[Lemma 3.1]{Po83} and \cite[Lemma 4.4]{Ge95}. Corollary \ref{corB} moreover implies that if $M_1$ has property Gamma, then $M_1 \subset M$ is a maximal Gamma subalgebra with expectation in $M$. The structural result in Corollary \ref{corB} allows us to obtain a wide range of maximal amenable subalgebras inside nonamenable factors. In particular, Corollary \ref{corC} below provides the first class of concrete maximal amenable subalgebras with expectation in the framework of type ${\rm III}$ factors.

\begin{lettercor}\label{corC}
Any diffuse amenable von Neumann algebra with separable predual can be concretely realized as a maximal amenable subalgebra with expectation inside a full nonamenable type ${\rm III_1}$ factor.
\end{lettercor}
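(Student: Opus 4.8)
The plan is to realize an arbitrary diffuse amenable von Neumann algebra $P$ with separable predual as $M_1$ in a free product, apply Corollary~\ref{corB}, and check the outcome is a type ${\rm III}_1$ factor. First I would equip $P$ with a faithful normal state $\psi$. Since $P$ is diffuse and amenable (hence hyperfinite), one can arrange, after tensoring $P$ with the hyperfinite ${\rm II}_1$ factor $R$ if necessary --- note $P \mathbin{\overline{\otimes}} R$ is again diffuse, amenable, and still has separable predual, and has the same potential to be maximal amenable --- that the centralizer $P^{\psi}$ is diffuse. Concretely, one may instead take $P$ itself: every diffuse amenable von Neumann algebra admits a faithful normal state whose centralizer is diffuse; for instance, decompose $P$ over its center and use that diffuse amenable algebras with separable predual carry such states (this is where I would invoke the structure/classification of amenable von Neumann algebras). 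Set $(M_1, \varphi_1) = (P, \psi)$ and let $(M_2, \varphi_2)$ be a fixed von Neumann algebra that guarantees the free product is a type ${\rm III}_1$ factor --- the natural choice is $M_2 = \rL(\F_2)$ with its trace, or more robustly a type ${\rm III}_1$ factor such as the free Araki--Woods factor $\Gamma(H_{\mathbb R}, U_t)\dpr$ with a suitable nonperiodic one-parameter group, so that $T(M) = \{0\}$. Form $(M, \varphi) = (M_1, \varphi_1) \ast (M_2, \varphi_2)$.

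Next I would record that $M$ is a full nonamenable type ${\rm III}_1$ factor: fullness and nonamenability of free products under the assumption that $M_1^{\varphi_1}$ is diffuse are known (by Ueda's work on free product factors and the results cited around Corollary~\ref{corB}), and the type ${\rm III}_1$ conclusion follows from computing Connes' $T$-invariant, which vanishes once one free factor contributes a type ${\rm III}_1$ summand or once the pair is chosen so that $\varphi$ has the right modular properties. This step is essentially a citation to the established theory of free product von Neumann algebras; I would state it as a lemma and reference Ueda.

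Then I would apply Corollary~\ref{corB} to the inclusion $M_1 \subset M$. Since $M_1 = P$ is amenable, it has trivial central sequences relative to nothing in particular --- but the point is the dichotomy in Corollary~\ref{corB}: there are pairwise orthogonal central projections $z_n \in \mathcal Z(M_1)$ summing to $1$ with $M_1 z_0 = Q z_0$ and $Q z_n$ a full \emph{nonamenable} factor for $n \geq 1$, for any intermediate $M_1 \subset Q \subset M$ with expectation. If $Q$ is amenable, then each $Q z_n$ with $n \geq 1$ is simultaneously amenable (as a corner of $Q$) and nonamenable, forcing $z_n = 0$ for all $n \geq 1$; hence $z_0 = 1$ and $Q = M_1$. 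This shows $M_1$ is maximal among amenable subalgebras with expectation in $M$, i.e.\ $M_1 \subset M$ is maximal amenable with expectation. Combined with the previous step, $P \cong M_1$ is concretely realized as a maximal amenable subalgebra with expectation inside the full nonamenable type ${\rm III}_1$ factor $M$.

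The main obstacle I anticipate is the very first reduction: ensuring that the given diffuse amenable $P$ can be presented \emph{with a faithful normal state whose centralizer is diffuse}, so that the hypothesis of Corollary~\ref{corB} is met while $P$ itself (up to isomorphism, or a harmless stabilization) is the algebra being realized. For $P$ a type ${\rm II}_1$ or properly infinite semifinite algebra this is automatic; the delicate case is when $P$ has a type ${\rm III}$ part with possibly non-diffuse centralizers in its natural states, where one must invoke the fine structure of amenable type ${\rm III}$ algebras (Connes--Haagerup classification, discrete/continuous decomposition) to produce the desired state --- alternatively, sidestep the issue by proving that stabilizing $P \rightsquigarrow P \mathbin{\overline{\otimes}} R_{\infty}$ or $P \mathbin{\overline{\otimes}} R$ does not change membership in the class ``diffuse amenable with separable predual'' and does provide a diffuse centralizer. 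The second, more routine, obstacle is assembling the correct references to pin down that the specific free product $M$ is full, nonamenable, and of type ${\rm III}_1$; this is standard but must be stated carefully.
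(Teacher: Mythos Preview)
Your approach is exactly the paper's: equip $M_1 = P$ with a state having diffuse centralizer, form a free product with a suitable $M_2$, cite Ueda for fullness and type ${\rm III}_1$, and run Corollary~\ref{corB}. Two points where you hedge or misstep, though.

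First, the ``main obstacle'' you anticipate is already handled in the paper by Proposition~\ref{diffuse-centralizer}: \emph{every} diffuse $\sigma$-finite von Neumann algebra (amenability is irrelevant here) carries a faithful normal state with diffuse centralizer; the proof there uses \cite{HS90} and works uniformly across types. Your proposed fallback of replacing $P$ by $P \ovt R$ or $P \ovt R_\infty$ is not a harmless stabilization --- it changes the isomorphism class (e.g.\ if $P$ is abelian) and so would not realize $P$ itself as the maximal amenable subalgebra. Drop that detour and invoke Proposition~\ref{diffuse-centralizer} directly.

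Second, your ``natural choice'' $M_2 = \rL(\F_2)$ with its trace does \emph{not} work: if $M_1$ happens to be tracial (say $M_1 = R$), the free product is a ${\rm II}_1$ factor, not type ${\rm III}_1$. The paper takes $M_2 = R_\infty$, the hyperfinite type ${\rm III}_1$ factor, with any faithful normal state; then \cite[Theorem~3.4]{Ue11} gives directly that $M$ is a full nonamenable type ${\rm III}_1$ factor, with no further computation of $T(M)$ needed. Your alternative of a type ${\rm III}_1$ free Araki--Woods factor would also do.
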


Our main last result deals with Gamma stability for subalgebras of {\em tracial} free product von Neumann algebras. Theorem \ref{thmD} below is a further generalization of Corollary \ref{corB} where the subalgebra $Q \subset M$ is only assumed to have a diffuse intersection with $M_1$.

\begin{letterthm}\label{thmD}
Let $(M_1, \tau_1)$ and $(M_2, \tau_2)$ be von Neumann algebras with separable predual endowed with faithful normal tracial states. Assume that $M_1$ is diffuse. Denote by $(M, \tau) = (M_1, \tau_1) \ast (M_2, \tau_2)$ the tracial free product von Neumann algebra. 

Then for every von Neumann subalgebra $Q \subset M$ such that $Q \cap M_1$ is diffuse, there exists a central projection $z \in \mathcal Z(Q' \cap M) \cap \mathcal Z(Q' \cap M^\omega) \subset M_1$ such that 
\begin{itemize}
\item $Q z \subset z M_1 z$ and 
\item $(Q' \cap M^\omega)(1 - z) = (Q' \cap M) (1 - z)$ is discrete.
\end{itemize}
\end{letterthm}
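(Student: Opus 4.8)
The plan is to reduce Theorem \ref{thmD} to Corollary \ref{corB} by a cutting-and-amplification argument, using the diffuseness of $Q \cap M_1$ to manufacture a copy of the hypothesis "$M_1^{\varphi_1}$ diffuse" in a corner. First I would set $N := (Q \cap M_1)' \cap M$ and $P := (Q \cap M_1)' \cap M_1$. Since $Q \cap M_1$ is a diffuse von Neumann subalgebra of the tracial free product $M = M_1 \ast M_2$ and is contained in the free factor $M_1$, one expects a malnormality-type phenomenon: I would invoke the free-product analysis (Popa-type intertwining / the known structure of relative commutants of diffuse subalgebras of a free factor inside a free product, as in the tracial case of \cite{IPP05}, or more directly the tracial specialization of Theorem \ref{key-thm}) to show that $N = P$, i.e.\ $(Q \cap M_1)' \cap M \subseteq M_1$. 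Granting this, both $Q$ and $M_1$ are contained in $N = P \subseteq M_1$, and in fact $Q \cap M_1 \subseteq P \subseteq M_1 \subseteq M$, so after passing to this commutant we are looking at an inclusion of the form $M_1 \subset \widetilde Q \subset M$ only after we also enlarge; the cleaner route is: $Q \vee M_1 =: R$ satisfies $Q \cap M_1 \subset M_1 \subset R$, and I would instead work with the inclusion $M_1 \subset (M_1 \vee Q) \subset M$ once we know $Q$ normalizes little.

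Let me restructure: the efficient approach is to apply Corollary \ref{corB} (in its tracial form, where $\varphi_i = \tau_i$ and $M_1^{\varphi_1} = M_1$ is diffuse by hypothesis) to the intermediate algebra $M_1 \subset M_1 \vee Q \subset M$, \emph{provided} $M_1 \vee Q$ is with expectation — which it is, automatically, in the tracial setting, since the trace-preserving conditional expectation onto any von Neumann subalgebra exists. Corollary \ref{corB} then gives pairwise orthogonal central projections $z_n \in (M_1 \vee Q)' \cap M \subseteq \mathcal Z(M_1)$ with $\sum z_n = 1$, with $M_1 z_0 = (M_1 \vee Q) z_0$ and with $(M_1 \vee Q)z_n$ a full nonamenable factor satisfying $((M_1\vee Q)z_n)' \cap (z_n M z_n)^\omega = \mathbf C z_n$ for $n \geq 1$. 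Set $z := z_0$. On $z_0$ we get $Q z_0 \subseteq (M_1 \vee Q)z_0 = M_1 z_0 = z_0 M_1 z_0$, which is the first bullet. For the complement $1 - z_0 = \sum_{n \geq 1} z_n$: since $Q z_n \subseteq (M_1 \vee Q)z_n$ which has trivial central sequence algebra relative to $z_n M z_n$, and since $Q z_n$ contains the diffuse algebra $(Q \cap M_1)z_n$, a dimension/compactness argument shows $Q' \cap (z_n M z_n)^\omega$ must be "discrete" (atomic) — more precisely one pushes the fullness of $(M_1 \vee Q)z_n$ down to control $(Qz_n)' \cap (z_n M z_n)^\omega$, and then assembles across $n$ to conclude $(Q' \cap M^\omega)(1-z) = (Q' \cap M)(1-z)$ is discrete. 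One must also check $z \in \mathcal Z(Q' \cap M) \cap \mathcal Z(Q' \cap M^\omega)$, which follows since $z \in (M_1 \vee Q)' \cap M \subseteq Q' \cap M$ and $z$ is central in $M_1 \supseteq Q \cap M_1$, combined with the fact that each $z_n$ is fixed by the relevant ultrapower.

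The main obstacle I anticipate is the very first step: showing that the diffuseness of $Q \cap M_1$ forces enough rigidity to legitimately feed the pair $(M_1, M_1 \vee Q)$ into Corollary \ref{corB}, or alternatively, bypassing Corollary \ref{corB} and redoing its proof with $M_1$ replaced throughout by the smaller diffuse algebra $Q \cap M_1$. The cleanest execution is probably the latter: rerun the asymptotic-orthogonality machinery of Theorem \ref{key-thm} using $Q \cap M_1$ (which in the tracial case has diffuse centralizer since it is tracial and diffuse), obtaining that any central sequence of $Q$ in $M$ is asymptotically supported in $M_1$, hence $(Q' \cap M^\omega) \ominus (Q' \cap M_1^\omega)$ is small; combined with the tracial structure this yields the decomposition $Q' \cap M^\omega = (Q'\cap M^\omega)z \oplus (Q' \cap M^\omega)(1-z)$ with the stated properties. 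Isolating the correct central projection $z$ — it should be the maximal projection under which $Q$ is "absorbed" into $M_1$ — and verifying it lands in both centers simultaneously will require care, but is expected to follow from the $\varepsilon$-orthogonality estimates exactly as in the proof of Corollary \ref{corB}. The rest (discreteness of the complementary part, and that there $Q' \cap M^\omega$ collapses onto $Q' \cap M$) is a routine consequence of the fullness assertions already available from the $n \geq 1$ pieces of the decomposition.
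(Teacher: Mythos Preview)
Your reduction via $M_1 \vee Q$ and Corollary~\ref{corB} does not work, because the commutants go the wrong way. Corollary~\ref{corB} tells you that on the pieces $z_n$ with $n \geq 1$ one has $((M_1 \vee Q)z_n)' \cap (z_n M z_n)^\omega = \C z_n$. But $Q \subset M_1 \vee Q$ implies $(M_1 \vee Q)' \cap M^\omega \subset Q' \cap M^\omega$, not the reverse: the relative commutant of $Q$ is \emph{larger} than that of $M_1 \vee Q$, so triviality of the latter says nothing whatsoever about the former. Your phrase ``pushes the fullness of $(M_1 \vee Q)z_n$ down to control $(Qz_n)' \cap (z_n M z_n)^\omega$'' is exactly the step that cannot be made; there is no general mechanism to transfer fullness to a subalgebra. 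The same inversion problem blocks your check that $z_0 \in \mathcal Z(Q' \cap M^\omega)$: you only know $z_0 \in (M_1 \vee Q)' \cap M$, which is the smaller commutant.

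The paper's proof is organized quite differently and uses substantially heavier input. The projection $z$ is not obtained from Corollary~\ref{corB} at all: it comes directly from \cite[Lemma~2.7]{Io12}, which for any tracial inclusion produces $z \in \mathcal Z(Q' \cap M) \cap \mathcal Z(Q' \cap M^\omega)$ with $(Q' \cap M^\omega)z$ diffuse and $(Q' \cap M^\omega)(1-z) = (Q' \cap M)(1-z)$ discrete. The entire content of Theorem~\ref{thmD} is then the implication ``$Q \cap M_1$ diffuse and $Q' \cap M^\omega$ diffuse $\Rightarrow Q \subset M_1$'' (Theorem~\ref{step1}), applied on the $z$-corner. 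That implication is \emph{not} a rerun of the asymptotic orthogonality of Theorem~\ref{key-thm} alone: its proof splits according to whether $Qq$ is amenable (where \cite[Theorem~8.1]{Ho12b} is invoked on top of Theorem~\ref{key-thm} and the mixing property) or nonamenable (where Peterson's $\rL^2$-rigidity \cite{Pe06} combined with Popa's intertwining \cite{IPP05} is essential). None of these ingredients appear in your proposal, and I do not see how to avoid them.
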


Theorem \ref{thmD} shows in particular that whenever $Q \subset M$ is a subalgebra such that both $Q \cap M_1$ and $Q' \cap M^\omega$ are diffuse, then $Q \subset M_1$ (see Theorem \ref{step1}). This is a strengthening of the Gamma stability result in Theorem \ref{thmA}. Besides the asymptotic orthogonality property obtained in Theorem \ref{key-thm}, the proof of Theorem \ref{thmD} uses two more ingredients of ${\rm II_1}$ factors: Popa's intertwining techniques \cite{Po01, Po03} and Peterson's $\rL^2$-rigidity results for tracial free product von Neumann algebras \cite{Pe06}.

In Section \ref{preliminaries}, we recall a few preliminaires on free product and ultraproduct von Neumann algebras. In Section \ref{asymptotic}, we prove the key result regarding asymptotic orthogonality inside free products of arbitrary von Neumann algebras. Finally, we prove in Section \ref{proofs} the main results of the paper.

\subsection*{Acknowledgments}

I am grateful to R\'emi Boutonnet and Sven Raum for their valuable comments regarding a first draft of this manuscript. I especially thank R\'emi for pointing out a gap in the initial proof of Proposition \ref{gamma}. Finally, I thank the referee for carefully reading the paper and useful remarks.

\section{Preliminaries}\label{preliminaries}

We fix once and for all a non-principal ultrafilter $\omega \in \beta(\N) \setminus \N$. All the von Neumann algebras that we consider in this paper are assumed to be $\sigma$-{\em finite}, that is, countably decomposable. We say that $M$ is a {\em tracial} von Neumann algebra if $M$ admits a faithful normal tracial state $\tau$.

\subsection*{Background on $\sigma$-finite von Neumann algebras}

Let $M$ be any $\sigma$-finite von Neumann algebra. We denote by $\Ball(M)$ the unit ball of $M$ with respect to the uniform norm $\|\cdot\|_\infty$, $\mathcal U(M)$ the group of unitaries in $M$ and $\mathcal Z(M)$ the center of $M$. Let $\varphi \in M_\ast$ be a faithful normal state. We denote by $\rL^2(M, \varphi)$ (or simply $\rL^2(M)$ when no confusion is possible) the GNS $\rL^2$-completion of $M$ with respect to the inner product defined by $\langle x, y \rangle_\varphi = \varphi(y^*x)$ for all $x, y \in M$. We  denote by $\Lambda_\varphi : M \to \rL^2(M) : x \mapsto \Lambda_\varphi(x)$ the canonical embedding and by $J_\varphi : \rL^2(M) \to \rL^2(M)$ the canonical conjugation. We have $x \Lambda_\varphi (y) = \Lambda_\varphi(xy)$ for all $x, y \in M$. 

We say that two elements $x, y \in M$ are $\varphi$-{\em orthogonal} in $M$ if $\varphi(y^* x) = 0$ or equivalently if the vectors $\Lambda_\varphi(x)$ and $\Lambda_\varphi(y)$ are orthogonal in the Hilbert space $\rL^2(M)$. For all $x \in M$, write $\|x\|_\varphi = \varphi(x^*x)^{1/2}$ and $\|x\|_\varphi^\sharp = \varphi(x^*x + xx^*)^{1/2}$. Recall that the strong (resp.\ $\ast$-strong) topology on uniformly bounded subsets of $M$ coincides with the topology defined by $\|\cdot\|_\varphi$ (resp.\ $\|\cdot\|_\varphi^\sharp$).

An element $x \in M$ is said to be {\em analytic} with respect to the modular automorphism group $(\sigma_t^\varphi)$ if the function $\R \to M : t \mapsto \sigma_t^\varphi(x)$ can be extended to an $M$-valued entire analytic function over $\C$.

We will be using the following standard facts.

\begin{prop}\label{modular-analytic}
Let $(M, \varphi)$ be any $\sigma$-finite von Neumann algebra endowed with a faithful normal state. 
\begin{enumerate}
\item The subset $\mathcal A \subset M$ of all the elements in $M$ which are analytic with respect to the modular automorphism group $(\sigma_t^\varphi)$ forms a unital $\sigma$-strongly dense $\ast$-subalgebra of $M$.
\item For all $a \in \mathcal A$ and all $x \in M$, we have 
$$\Lambda_\varphi(x a) = J_\varphi \sigma_{-{\rm i}/2}^\varphi(a^*) J_\varphi \, \Lambda_\varphi(x).$$
\item For all $a \in \mathcal A$ and all $x \in M$, we have 
$$\varphi(a x) = \varphi(x \sigma_{-\rm i}^\varphi(a)).$$
In particular, for all $a \in \mathcal A$ and all $x, y \in M$, we have that $xa$ and $y$ are $\varphi$-orthogonal in $M$ if and only if $x$ and $y \sigma_{{\rm i}}^\varphi(a)^*$ are $\varphi$-orthogonal in $M$.
\end{enumerate}
\end{prop}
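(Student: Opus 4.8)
The statement to prove is Proposition \ref{modular-analytic}, which collects three standard facts about the modular automorphism group. Let me sketch a proof plan.

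\textbf{Plan for Proposition \ref{modular-analytic}.}

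The plan is to derive all three parts from Tomita--Takesaki theory, starting from the standard mollification construction. For part (1), given $x \in M$, I would set
\[
a_n = \sqrt{\frac{n}{\pi}} \int_{\R} e^{-n t^2} \, \sigma_t^\varphi(x) \, dt,
\]
where the integral is taken in the $\sigma$-weak sense. A routine change of variables shows $t \mapsto \sigma_t^\varphi(a_n)$ extends to the entire function $z \mapsto \sqrt{n/\pi}\int_{\R} e^{-n(t-z)^2}\sigma_t^\varphi(x)\,dt$, so each $a_n$ is analytic; the bound $\|a_n\|_\infty \le \|x\|_\infty$ together with $\sigma$-strong continuity of $t \mapsto \sigma_t^\varphi(x)$ and dominated convergence gives $a_n \to x$ $\sigma$-strongly. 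Since $(\sigma_t^\varphi)$ consists of $\ast$-automorphisms and the analytic extensions compose, $\mathcal A$ is a unital $\ast$-subalgebra. The main technical point here is justifying that the vector-valued Gaussian integral yields an element of $M$ with the claimed entire extension; this is classical (cf.\ Takesaki, \emph{Theory of Operator Algebras II}) but requires some care with the interchange of integral and the automorphism.

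For part (2), recall that the closure $S_\varphi$ of $x \mapsto x^*$ on $\Lambda_\varphi(M)$ has polar decomposition $S_\varphi = J_\varphi \Delta_\varphi^{1/2}$, and $\sigma_t^\varphi = \Ad(\Delta_\varphi^{it})$. For $a \in \mathcal A$, the entire extension $\sigma_z^\varphi(a) = \Delta_\varphi^{iz} a \Delta_\varphi^{-iz}$ holds in the appropriate sense, and $\Lambda_\varphi(a)$ lies in the domain of $\Delta_\varphi^{1/2}$ with $\Delta_\varphi^{1/2}\Lambda_\varphi(a) = J_\varphi \Lambda_\varphi(a^*)$. The identity $xa \cdot \Omega_\varphi = x \sigma_{-i/2}^\varphi(a)\Delta_\varphi^{1/2}\Omega_\varphi$ — using $\Delta_\varphi^{1/2}$ applied via the right action and the commutation of $x$ with $J_\varphi M J_\varphi$ — unwinds to $\Lambda_\varphi(xa) = J_\varphi \sigma_{-i/2}^\varphi(a^*) J_\varphi \Lambda_\varphi(x)$. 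Concretely I would first verify the formula for $x \in \mathcal A$ as well, where both sides are manifestly well-defined via analytic continuation of the easy identity $\Lambda_\varphi(xa) = J_\varphi \sigma_{-i/2}^\varphi(a^*) J_\varphi \Lambda_\varphi(x)$ checked on the real axis through $\sigma_t^\varphi$-covariance and the KMS relation, then pass to general $x \in M$ by $\sigma$-strong density and boundedness of the operator $J_\varphi \sigma_{-i/2}^\varphi(a^*) J_\varphi$ on $\rL^2(M)$.

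For part (3), the identity $\varphi(ax) = \varphi(x \sigma_{-i}^\varphi(a))$ is exactly the KMS condition for $\varphi$ with respect to $(\sigma_t^\varphi)$, evaluated at the analytic element $a$: the function $z \mapsto \varphi(\sigma_z^\varphi(a) x)$ and $z \mapsto \varphi(x \sigma_{z}^\varphi(a))$ agree after the standard boundary-value argument on the strip, giving $\varphi(ax) = \varphi(x\sigma_{-i}^\varphi(a))$. For the final ``in particular'' clause, I apply this with $a$ replaced by $\sigma_i^\varphi(a)^*$ (which is again analytic) and with $x$ replaced by $y^*x$: we get $\varphi((y^*x)\,\sigma_i^\varphi(a)^*)\overline{\phantom{x}}$-type manipulations; more directly, $\varphi(y^*(xa)) = \langle xa, y\rangle_\varphi$ and, using part (3), $\varphi(y^* x a) = \varphi(a y^* x \, \sigma_i^\varphi(\cdot))$... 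I would instead just compute $\langle xa, y\rangle_\varphi = \varphi(y^* x a) = \varphi((a)(y^*x))$ rearranged via $\varphi(ay^*x) = \varphi(y^*x\sigma_{-i}^\varphi(a))$ — no; the cleanest route is: $\langle xa,y\rangle_\varphi = \varphi(y^*xa)$ and $\langle x, y\sigma_i^\varphi(a)^*\rangle_\varphi = \varphi(\sigma_i^\varphi(a) y^* x)$, and these are equal by part (3) applied to the analytic element $\sigma_i^\varphi(a)$ with the role of $x$ played by $y^*x$, since $\varphi(\sigma_i^\varphi(a)\,y^*x) = \varphi(y^*x\,\sigma_{-i}^\varphi\sigma_i^\varphi(a)) = \varphi(y^*x a)$. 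Hence $xa \perp_\varphi y$ iff $x \perp_\varphi y\sigma_i^\varphi(a)^*$.

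The main obstacle is not conceptual — everything is in Takesaki — but bookkeeping: keeping the signs and the $\pm i/2$ versus $\pm i$ shifts consistent, and being careful that the analytic continuations of $\sigma_t^\varphi$ applied to elements of $\mathcal A$ genuinely land in $M$ and interact correctly with $J_\varphi$ and $\Delta_\varphi^{1/2}$ on the unbounded domains involved. I expect the write-up to mostly cite \cite{Co72} or standard references for (1) and the KMS condition, and then spend its effort on the clean derivation of (2) and the orthogonality reformulation in (3).
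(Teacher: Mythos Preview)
Your proposal is correct and the overall approach matches the paper's: parts (1) and (2) are standard Tomita--Takesaki facts, and the paper simply cites \cite[Lemma VIII.2.3]{Ta03} and \cite[Lemma VIII.3.10]{Ta03} rather than sketching the Gaussian mollification and polar-decomposition arguments as you do. The one genuine difference is in part (3): you invoke the KMS condition directly, whereas the paper instead \emph{derives} (3) from (2) by computing
\[
\varphi(x\,\sigma_{-{\rm i}}^\varphi(a)) = \langle \Lambda_\varphi(x\,\sigma_{-{\rm i}}^\varphi(a)), \Lambda_\varphi(1)\rangle_\varphi
= \langle J_\varphi \sigma_{{\rm i}/2}^\varphi(a^*) J_\varphi \Lambda_\varphi(x), \Lambda_\varphi(1)\rangle_\varphi
= \langle \Lambda_\varphi(x), \Lambda_\varphi(a^*)\rangle_\varphi = \varphi(ax),
\]
using (2) twice (once with $\sigma_{-{\rm i}}^\varphi(a)$, once with $a^*$ and $x=1$). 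Your KMS route is arguably more direct and avoids needing (2) as an input; the paper's route has the virtue of making the proposition self-contained once (2) is in hand. Your derivation of the orthogonality clause via $\varphi(\sigma_{\rm i}^\varphi(a)\,y^*x) = \varphi(y^*x\,a)$ is exactly the computation the paper gives.
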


\begin{proof}
$(1)$ follows from \cite[Lemma VIII.2.3]{Ta03} and $(2)$ follows from \cite[Lemma VIII.3.10]{Ta03}. Let us prove $(3)$. For every $a \in \mathcal A$ and every $x \in M$, we have
\begin{align*}
\varphi(x \sigma_{-\rm i}^\varphi(a)) &= \langle \Lambda_\varphi(x \sigma_{-\rm i}^\varphi(a)), \Lambda_\varphi(1) \rangle_\varphi \\
& = \langle J_\varphi \sigma_{\rm i/2}^\varphi(a^*) J_\varphi \,  \Lambda_\varphi(x), \Lambda_\varphi(1) \rangle_\varphi \\
& = \langle   \Lambda_\varphi(x), J_\varphi \sigma_{\rm -i/2}^\varphi(a) J_\varphi \, \Lambda_\varphi(1) \rangle_\varphi \\
& = \langle  \Lambda_\varphi(x), \Lambda_\varphi(a^*) \rangle_\varphi \\
& = \varphi(a x).
\end{align*}
In particular, for all $a \in \mathcal A$ and all $x, y \in M$, we have
$$\varphi((y \sigma_{\rm i}^\varphi(a)^*)^* \, x ) = \varphi(\sigma_{\rm i}^\varphi(a) \, y^* x) = \varphi(y^* \, xa).$$
Hence $xa$ and $y$ are $\varphi$-orthogonal in $M$ if and only if $x$ and $y \sigma_{{\rm i}}^\varphi(a)^*$ are $\varphi$-orthogonal in $M$.
\end{proof}

\begin{prop}\label{diffuse-centralizer}
Let $M$ be any $\sigma$-finite von Neumann algebra.
\begin{enumerate}
\item We have that $M$ is diffuse if and only if there exists a faithful normal state $\varphi \in M_*$ such that the centralizer $M^\varphi$ is diffuse. Moreover in that case, there exists a unitary $u \in \mathcal U(M^\varphi)$ such that $u^k \to 0$ weakly as $|k| \to \infty$.
\item Let $N \subset M$ be a von Neumann subalgebra with expectation. If $N$ is diffuse, so is $M$.
\end{enumerate}
\end{prop}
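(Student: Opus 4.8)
The plan is to isolate one elementary tracial fact, prove it by hand, and then deduce both $(1)$ and $(2)$ from it together with Takesaki's theorem on $\varphi$-preserving conditional expectations; the only genuinely hard input will be a structural fact about type $\mathrm{III}$ factors. \emph{Step 1 (a tracial lemma).} I would first show that if $P$ is a tracial von Neumann algebra with faithful normal trace $\tau$ and $Q\subset P$ is a diffuse von Neumann subalgebra, then $P$ is diffuse. Suppose $p\in P$ were a minimal projection; then $\tau(p)>0$ and $pxp=\tau(xp)\tau(p)^{-1}p$ for all $x\in P$. Let $E_Q\colon P\to Q$ be the $\tau$-preserving conditional expectation and $e\in Q$ the support of $E_Q(p)\neq 0$. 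Since $eQe$ is diffuse, choose infinitely many nonzero pairwise orthogonal projections $q_n\in eQe$. As $q_n\le e$ one has $q_nE_Q(p)q_n\neq 0$, so $\tau(q_np)=\tau(q_nE_Q(p)q_n)>0$, hence $pq_np=\tau(q_np)\tau(p)^{-1}p\neq 0$ and $q_np\neq 0$. The polar decomposition $q_np=v_n|q_np|$ produces range projections $s_n:=v_nv_n^*\le q_n$ with $s_n\sim p$; being pairwise orthogonal with $\tau(s_n)=\tau(p)>0$, they force $\sum_n\tau(s_n)=\infty$, contradicting $\tau(1)<\infty$.

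\emph{Step 2 (proof of $(1)$).} For "$\Rightarrow$" the crucial point — deferred to the last paragraph — is to produce a diffuse abelian subalgebra $A\subset M$ which is the range of a faithful normal conditional expectation $E_A\colon M\to A$. Given such $A$, put $\varphi:=\omega_A\circ E_A$ for a faithful normal state $\omega_A$ on $A$; then $E_A$ is $\varphi$-preserving, so by Takesaki's theorem $\sigma^\varphi_t(A)=A$ and $\sigma^\varphi_t|_A=\sigma^{\omega_A}_t=\mathrm{id}$, i.e.\ $A\subset M^\varphi$. Since $M^\varphi$ is tracial and contains the diffuse algebra $A$, Step 1 shows $M^\varphi$ is diffuse. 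For the "moreover", choose inside $A$ a unitary $u=\exp(2\pi\mathrm i\,h)$ with $h$ inducing a measure-preserving map from the spectrum of $A$ onto $([0,1),\mathrm{Leb})$; as every normal functional on $A$ is absolutely continuous with respect to the underlying measure, the Riemann–Lebesgue lemma yields $\langle u^k\xi,\eta\rangle\to 0$ for all $\xi,\eta$, i.e.\ $u^k\to 0$ weakly. For "$\Leftarrow$", assume $M^\varphi$ is diffuse and, for contradiction, that $M$ has a minimal projection $p$. Its central support $z(p)$ is — as one checks — a minimal projection of $\mathcal Z(M)$, so $Mz(p)$ is a factor containing a minimal projection, hence $Mz(p)\cong\mathbf B(K)$. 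Since $z(p)\in\mathcal Z(M)\subset M^\varphi$, the corner $z(p)M^\varphi z(p)$ is the centralizer in $Mz(p)\cong\mathbf B(K)$ of the induced faithful normal state $\varphi_0=\mathrm{Tr}(\rho\,\cdot\,)$, namely $\{\rho\}'=\bigoplus_iM_{m_i}(\mathbf C)$ with all $m_i<\infty$ (a faithful trace-class $\rho$ has trivial kernel and finite-dimensional eigenspaces). This algebra is atomic, contradicting that $z(p)M^\varphi z(p)$, a corner of the diffuse $M^\varphi$, is diffuse.

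\emph{Step 3 (proof of $(2)$).} Let $N\subset M$ be diffuse with faithful normal conditional expectation $E\colon M\to N$. Apply $(1)$"$\Rightarrow$" to $N$ to get a faithful normal state $\psi$ on $N$ with $N^\psi$ diffuse, and set $\varphi:=\psi\circ E$. Then $\varphi\circ E=\varphi$, so Takesaki's theorem gives $\sigma^\varphi_t(N)=N$ and $\sigma^\varphi_t|_N=\sigma^\psi_t$, whence $N^\psi=N\cap M^\varphi\subset M^\varphi$. Since $M^\varphi$ is tracial and contains the diffuse $N^\psi$, Step 1 yields $M^\varphi$ diffuse, and then $(1)$"$\Leftarrow$" (Step 2) yields $M$ diffuse. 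Note that the three steps are arranged so as to be non‑circular: Step 1 stands alone, Step 2 uses only Step 1, and Step 3 uses Steps 1 and 2.

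\emph{The main obstacle.} What remains is to construct, from diffuseness of $M$, the diffuse abelian subalgebra with expectation used in Step 2, and this is where I expect the real difficulty. I would argue via the central decomposition of $M$. On the summand where $\mathcal Z(M)$ is diffuse, take $A=\mathcal Z(M)$: it carries a faithful normal conditional expectation, because $\mathcal Z(M)\subset M^{\varphi_0}\subset M$ for any faithful normal state $\varphi_0$ and $M^{\varphi_0}$ is finite. On the atomic part of the center, $M$ is a countable direct sum of diffuse factors, each of type $\mathrm{II}_1$, $\mathrm{II}_\infty$ or $\mathrm{III}$. For type $\mathrm{II}_1$ a masa works; for type $\mathrm{II}_\infty$, writing the factor as (a finite corner)$\,\ovt\,\mathbf B(\ell^2(\mathbf N))$ and taking (a masa of the corner)$\,\ovt\,\ell^\infty(\mathbf N)$ together with the obvious expectation works. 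The delicate case is type $\mathrm{III}$: here one must invoke the structure theory of type $\mathrm{III}$ factors — namely that every $\sigma$-finite type $\mathrm{III}$ factor admits a faithful normal state with diffuse centralizer — and then take a masa inside that centralizer. This type $\mathrm{III}$ input is the crux of the proof.
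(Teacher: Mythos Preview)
Your argument is correct and follows essentially the same route as the paper: central decomposition into a diffuse-center summand plus diffuse factor summands, with the factor case handled by the structural fact (the paper cites \cite[Theorem 11.1]{HS90} uniformly for all diffuse factors, whereas you treat types ${\rm II_1}$ and ${\rm II_\infty}$ by hand and isolate type ${\rm III}$ as the hard input) that every $\sigma$-finite diffuse factor admits a faithful normal state with diffuse centralizer; your Step~1 is exactly what the paper invokes without proof in both the ``$\Leftarrow$'' direction of $(1)$ and in $(2)$. One small caveat: the ``moreover'' clause is applied later in the paper (Theorem \ref{key-thm}) for a \emph{given} $\varphi$ with $M^\varphi$ diffuse, not just the one you construct, so at that point you should take a masa inside the given $M^\varphi$ and run your Riemann--Lebesgue argument there, as the paper does.
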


\begin{proof}
$(1)$ Assume first that $M$ is diffuse. There exists a sequence of pairwise orthogonal projections $z_n \in \mathcal Z(M)$ such that $\sum_n z_n = 1$, $M z_0$ is a von Neumann algebra with a diffuse center and $M z_n$ is a diffuse factor for every $n \geq 1$. Choose any faithful normal state $\varphi_0$ on $Mz_0$. By \cite[Theorem 11.1]{HS90}, for every $n \geq 1$, choose a faithful normal state $\varphi_n$ on $Mz_n$ such that the centralizer $(M z_n)^{\varphi_n}$ is diffuse. Let $(a_n)_{n}$ be a sequence of positive reals so that $\sum_{n} a_n = 1$. The formula $\varphi = \sum_{n} a_n \varphi_n$ defines a faithful normal state on $M$ such that 
$$M^\varphi = \bigoplus_{n} (M z_n)^{\varphi_n}.$$
Therefore, $M^\varphi$ is diffuse. 

Assume next that $M^\varphi$ is diffuse for some faithful normal state $\varphi \in M_\ast$. Using the above decomposition, for every $n \geq 1$ such that $z_n \neq 0$, letting $\varphi_n = \frac{1}{\varphi(z_n)}\varphi(\cdot z_n)$, we have that $(M z_n)^{\varphi_n} = M^\varphi z_n$ is diffuse. Therefore $M z_n$ is a non-type ${\rm I}$ factor and so is diffuse. Thus, $M$ is diffuse.

When $M^\varphi$ is diffuse, take $A \subset M^\varphi$ a maximal abelian subalgebra. Then $A$ is necessarily diffuse. Then choose a diffuse subalgebra $B \subset A$ with separable predual. Since $B \cong \rL^\infty(\mathbf T)$, we can then take a unitary $u \in \mathcal U(B)$ such that $u^k \to 0$ weakly as $|k| \to \infty$. 

$(2)$ Denote by $E : M \to N$ a faithful normal conditional expectation and choose a faithful normal state $\psi \in N_\ast$ such that $N^\psi$ is diffuse. Then $\varphi = \psi \circ E$ is a faithful normal state on $M$ such that $N^\psi \subset M^\varphi$. Since $N^\psi$ is diffuse and $M^\varphi$ is tracial, $M^\varphi$ is diffuse and so is $M$ by item $(1)$ of the proposition.
\end{proof}

\subsection*{Free product von Neumann algebras}

For $i = 1, 2$, let $(M_i, \varphi_i)$ be any $\sigma$-finite von Neumann algebra endowed with a faithful normal state. The {\em free product von Neumann algebra} $(M, \varphi) = (M_1, \varphi_1) \ast (M_2, \varphi_2)$ is the von Neumann algebra $M$ generated by $M_1$ and $M_2$ where the faithful normal state $\varphi$ satisfies the following {\em freeness condition}:
$$\varphi(x_1 \cdots x_n) = 0 \; \text{ whenever } \; x_j \in M_{i_j} \ominus \C \; \text{ and } \; i_1 \neq \cdots \neq  i_{n}.$$
Here and in what follows, we denote by $M_i \ominus \C = \ker(\varphi_i)$. We refer to the product $x_1 \cdots x_n$ where $x_j \in M_{i_j} \ominus \C$ and $i_1 \neq \cdots \neq i_{n}$ as a {\em reduced word} in $(M_{i_1} \ominus \C) \cdots (M_{i_n} \ominus \C)$ of {\em length} $n \geq 1$. The linear span of $1$ and of all the reduced words in $(M_{i_1} \ominus \C) \cdots (M_{i_n} \ominus \C)$ where $n \geq 1$ and $i_1 \neq \cdots \neq i_{n}$ forms a unital $\sigma$-strongly dense $\ast$-subalgebra of $M$.

For all $n \geq 1$ and all $i_1 \neq \cdots \neq i_{n}$, the mapping 
\begin{align*}
\rL^2((M_{i_1} \ominus \C) \cdots (M_{i_n} \ominus \C), \varphi) & \to \rL^2(M_{i_1} \ominus \C, \varphi_{i_1}) \otimes \cdots \otimes \rL^2 (M_{i_n} \ominus \C, \varphi_{i_n}) \\
\Lambda_\varphi(x_1 \cdots x_n) & \mapsto \Lambda_{\varphi_{i_1}}(x_1) \otimes \cdots \otimes \Lambda_{\varphi_{i_n}}(x_n)  
\end{align*}
defines a unitary operator. Moreover, we have
$$\rL^2(M, \varphi) = \C \oplus \bigoplus_{n \geq 1} \bigoplus_{i_1 \neq \cdots \neq i_n} \rL^2(M_{i_1} \ominus \C, \varphi_{i_1}) \otimes \cdots \otimes \rL^2 (M_{i_n} \ominus \C, \varphi_{i_n}).$$

For all $t \in \R$, we have $\sigma_t^\varphi = \sigma_t^{\varphi_1} \ast \sigma_t^{\varphi_2}$ (see \cite[Lemma 1]{Ba93} and \cite[Theorem 1]{Dy92}). By \cite[Theorem IX.4.2]{Ta03}, there exists a unique $\varphi$-preserving faithful normal conditional expectation $E_{M_1} : M \to M_1$. Moreover, we have $E_{M_1}(x_1 \cdots x_n) = 0$ for all the reduced words $x_1 \cdots x_n$ which contains at least one letter from $M_2 \ominus \C$ (see \cite[Lemma 2.1]{Ue11}). For more on free product von Neumann algebras, we refer the reader to \cite{Ue98, Ue11, Vo85, Vo92}.

\subsection*{Ultraproduct von Neumann algebras}

Let $M$ be any $\sigma$-finite von Neumann algebra. Define
\begin{align*}
\mathcal I^\omega(M) &= \left\{ (x_n)_n \in \ell^\infty(\N, M) : x_n \to 0 \ast-\text{strongly as } n \to \omega \right\} \\
\mathcal M^\omega(M) &= \left \{ (x_n)_n \in \ell^\infty(\N, M) :  (x_n)_n \, \mathcal I^\omega(M) \subset \mathcal I^\omega(M) \text{ and } \mathcal I^\omega(M) \, (x_n)_n \subset \mathcal I^\omega(M)\right\}.
\end{align*}

We have that the {\em multiplier algebra} $\mathcal M^\omega(M)$ is a C$^*$-algebra and $\mathcal I^\omega(M) \subset \mathcal M^\omega(M)$ is a norm closed two-sided ideal. Following \cite{Oc85}, we define the {\em ultraproduct von Neumann algebra} $M^\omega$ by $M^\omega = \mathcal M^\omega(M) / \mathcal I^\omega(M)$. We denote the image of $(x_n)_n \in \mathcal M^\omega(M)$ by $(x_n)^\omega \in M^\omega$. 

For all $x \in M$, the constant sequence $(x)_n$ lies in the multiplier algebra $\mathcal M^\omega(M)$. We will then identify $M$ with $(M + \mathcal I^\omega(M))/ \mathcal I^\omega(M)$ and regard $M \subset M^\omega$ as a von Neumann subalgebra. The map $E_M : M^\omega \to M : (x_n)^\omega \mapsto \sigma \text{-weak} \lim_{n \to \omega} x_n$ is a faithful normal conditional expectation. For every faithful normal state $\varphi \in M_\ast$, the formula $\varphi^\omega = \varphi \circ E_M$ defines a faithful normal state on $M^\omega$. Observe that $\varphi^\omega((x_n)^\omega) = \lim_{n \to \omega} \varphi(x_n)$ for all $(x_n)^\omega \in M^\omega$.

Let $Q \subset M$ be any von Neumann subalgebra with faithful normal conditional expectation $E_Q : M \to Q$. Choose a faithful normal state $\varphi$ on $Q$ and still denote by $\varphi$ the faithful normal state $\varphi \circ E_Q$ on $M$. We have $\ell^\infty(\N, Q) \subset \ell^\infty(\N, M)$, $\mathcal I^\omega(Q) \subset \mathcal I^\omega(M)$ and $\mathcal M^\omega(Q) \subset \mathcal M^\omega(M)$. We will then identify $Q^\omega = \mathcal M^\omega(Q) / \mathcal I^\omega(Q)$ with $(\mathcal M^\omega(Q) + \mathcal I^\omega(M)) / \mathcal I^\omega(M)$ and regard $Q^\omega \subset M^\omega$ as a von Neumann subalgebra. Observe that the norm $\|\cdot\|_{(\varphi | Q)^\omega}$ on $Q^\omega$ is the restriction of the norm $\|\cdot\|_{\varphi^\omega}$ to $Q^\omega$. Observe moreover that $(E_Q(x_n))_n \in \mathcal I^\omega(Q)$ for all $(x_n)_n \in \mathcal I^\omega(M)$ and $(E_Q(x_n))_n \in \mathcal M^\omega(Q)$ for all $(x_n)_n \in \mathcal M^\omega(M)$. Therefore, the mapping $E_{Q^\omega} : M^\omega \to Q^\omega : (x_n)^\omega \mapsto (E_Q(x_n))^\omega$ is a well-defined conditional expectation satisfying $\varphi^\omega \circ E_{Q^\omega} = \varphi^\omega$. Hence, $E_{Q^\omega} : M^\omega \to Q^\omega$ is a faithful normal conditional expectation.

Put $\mathcal H = \rL^2(M, \varphi)$. The {\em ultraproduct Hilbert space} $\mathcal H^\omega$ is defined to  be the quotient of $\ell^\infty(\N, \mathcal H)$ by the subspace consisting in sequences $(\xi_n)_n$ satisfying $\lim_{n \to \omega} \|\xi_n\|_{\mathcal H} = 0$. We denote the image of $(\xi_n)_n \in \ell^\infty(\N, \mathcal H)$ by $(\xi_n)_\omega \in \mathcal H^\omega$. The inner product space structure on the Hilbert space $\mathcal H^\omega$ is defined by $\langle (\xi_n)_\omega, (\eta_n)_\omega\rangle_{\mathcal H^\omega} = \lim_{n \to \omega} \langle \xi_n, \eta_n\rangle_{\mathcal H}$. The GNS Hilbert space $\rL^2(M^\omega, \varphi^\omega)$ can be embedded into $\mathcal H^\omega$ as a closed subspace by $\Lambda_{\varphi^\omega}((x_n)^\omega) \mapsto (\Lambda_\varphi(x_n))_\omega$. For more on ultraproduct von Neumann algebras, we refer the reader to \cite{AH12, Oc85}.

Put $x \varphi = \varphi (\cdot x)$ and $\varphi x = \varphi(x \cdot)$ for all $x \in M$ and all $\varphi \in M_\ast$. We will be using the following standard facts.

\begin{lem}\label{lemma-states}
Let $(M, \varphi)$ be any $\sigma$-finite von Neumann algebra endowed with a faithful normal state. Then for every $x \in M$, we have
$$\|x\varphi\| \leq \|x\|_\varphi, \; \|\varphi x\| \leq \|x^*\|_\varphi \text{ and } \|x \varphi - \varphi x\| = \|x^* \varphi - \varphi x^*\|.$$
\end{lem}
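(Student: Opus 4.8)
All three assertions are standard consequences of the GNS picture, and I would record them by a direct computation against elements of the unit ball of $M$. For the first inequality, I would start from the duality formula $\|x\varphi\| = \sup\{|\varphi(yx)| : y \in \Ball(M)\}$ and rewrite $\varphi(yx) = \langle \Lambda_\varphi(x), \Lambda_\varphi(y^*)\rangle_\varphi$. The Cauchy--Schwarz inequality in $\rL^2(M,\varphi)$ then gives $|\varphi(yx)| \leq \|x\|_\varphi \, \|y^*\|_\varphi$, and since $\|y^*\|_\varphi = \varphi(yy^*)^{1/2} \leq \|y\|_\infty \leq 1$ for $y \in \Ball(M)$, taking the supremum over $y$ yields $\|x\varphi\| \leq \|x\|_\varphi$. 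For the second inequality, I would symmetrically write $\|\varphi x\| = \sup\{|\varphi(xy)| : y \in \Ball(M)\}$ and use $\varphi(xy) = \langle \Lambda_\varphi(y), \Lambda_\varphi(x^*)\rangle_\varphi$, so that Cauchy--Schwarz gives $|\varphi(xy)| \leq \|y\|_\varphi \, \|x^*\|_\varphi \leq \|x^*\|_\varphi$, and again taking the supremum gives $\|\varphi x\| \leq \|x^*\|_\varphi$.

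For the last identity I would invoke the natural isometric involution on the predual, $\psi \mapsto \psi^*$ defined by $\psi^*(y) = \overline{\psi(y^*)}$ for $y \in M$, which satisfies $\|\psi^*\| = \|\psi\|$. A one-line computation shows $(x\varphi)^*(y) = \overline{\varphi(y^*x)} = \varphi(x^*y) = (\varphi x^*)(y)$ and $(\varphi x)^*(y) = \overline{\varphi(xy^*)} = \varphi(yx^*) = (x^*\varphi)(y)$; hence $(x\varphi - \varphi x)^* = \varphi x^* - x^*\varphi = -(x^*\varphi - \varphi x^*)$, and applying the isometry property of the involution gives $\|x\varphi - \varphi x\| = \|x^*\varphi - \varphi x^*\|$.

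There is no serious obstacle here; the statement is elementary. The only point requiring a little care is the bookkeeping in the Cauchy--Schwarz step --- keeping track of which of $x$, $x^*$, $y$, $y^*$ appears on each side of the inner product so that the bound lands on $\|x\|_\varphi$ in the first case and on $\|x^*\|_\varphi$ in the second --- together with the observation that $\varphi$ being a state forces $\|y\|_\varphi \le \|y\|_\infty$ and $\|y^*\|_\varphi \le \|y\|_\infty$, so that the unit ball of $M$ maps into the unit ball of $\rL^2(M,\varphi)$ under $\Lambda_\varphi$.
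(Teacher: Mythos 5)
Your proposal is correct and follows essentially the same route as the paper: the two inequalities are proved by the identical Cauchy--Schwarz estimate against the unit ball of $M$, and your use of the isometric involution $\psi \mapsto \psi^*$ on the predual is just a repackaging of the paper's direct computation $|(x^*\varphi - \varphi x^*)(y)| = |(x\varphi - \varphi x)(y^*)|$ for $y \in \Ball(M)$.
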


\begin{proof}
Let $x \in M$. Using the Cauchy-Schwarz inequality, for all $y \in \Ball(M)$, we have
$$|(x \varphi)(y)| = |\varphi(y x)| \leq \|y^*\|_\varphi \, \|x\|_\varphi \leq \|x\|_\varphi$$
and hence $\|x \varphi\| \leq \|x\|_\varphi$. Likewise, for all $y \in \Ball(M)$, we have
$$|(\varphi x)(y)| = |\varphi(x y)| \leq \|x^*\|_\varphi \, \|y\|_\varphi \leq \|x^*\|_\varphi$$
and hence $\|\varphi x\| \leq \|x^*\|_\varphi$. Moreover, for all $y \in \Ball(M)$, we have
$$|(x^* \varphi - \varphi x^*)(y)| = |\varphi(y x^* - x^* y)| = |\overline{\varphi(y x^* - x^* y)}| = |\varphi(x y^* - y^* x)| = |(x \varphi - \varphi x)(y^*)|.$$
This implies that $\|x \varphi - \varphi x\| = \|x^* \varphi - \varphi x^*\|$.
\end{proof}

\begin{prop}\label{proposition-ultraproduct}
Let $(M, \varphi)$ be any $\sigma$-finite von Neumann algebra endowed with a faithful normal state.
\begin{enumerate}
\item For every $(x_n)_n \in \mathcal M^\omega(M)$ and every $(y_n)_n \in \ell^\infty(\N, M)$ such that $x_n - y_n \to 0$ $\ast$-strongly as $n \to \omega$, we have $(y_n)_n \in \mathcal M^\omega(M)$ and $(x_n)^\omega = (y_n)^\omega \in M^\omega$.
\item For every $(x_n)_n \in \ell^\infty(\N, M)$ satisfying $\lim_{n \to \omega} \|x_n \varphi - \varphi x_n\| = 0$, we have $(x_n)_n \in \mathcal M^\omega(M)$ and $(x_n)^\omega \in (M^\omega)^{\varphi^\omega}$.
\item For every projection $e \in M^\omega$, there exists a sequence of projections $(e_n)_n \in \mathcal M^\omega(M)$ such that $e = (e_n)^\omega$.
\end{enumerate}
\end{prop}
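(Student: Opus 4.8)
The three items are essentially unpacking the definitions of $\mathcal M^\omega(M)$, $\mathcal I^\omega(M)$, and $M^\omega$, together with the characterization of $\ast$-strong convergence of uniformly bounded nets via the state $\varphi$ and the two-sided norm $\|\cdot\|_\varphi^\sharp$. I would treat them one at a time.

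\emph{Item (1).} Suppose $(x_n)_n \in \mathcal M^\omega(M)$ and $(y_n)_n \in \ell^\infty(\N,M)$ with $x_n - y_n \to 0$ $\ast$-strongly as $n \to \omega$. Then $(x_n - y_n)_n \in \mathcal I^\omega(M)$ by definition, and since $\mathcal I^\omega(M) \subset \mathcal M^\omega(M)$, we get $(y_n)_n = (x_n)_n - (x_n - y_n)_n \in \mathcal M^\omega(M)$. Moreover $(x_n)^\omega - (y_n)^\omega = (x_n - y_n)^\omega = 0$ in $M^\omega = \mathcal M^\omega(M)/\mathcal I^\omega(M)$. The only mild point to check is that $(x_n - y_n)_n$ is bounded and lies in $\ell^\infty(\N,M)$, which is immediate since both $(x_n)_n$ and $(y_n)_n$ are bounded.

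\emph{Item (2).} Assume $(x_n)_n \in \ell^\infty(\N,M)$ with $\lim_{n\to\omega}\|x_n\varphi - \varphi x_n\| = 0$. To show $(x_n)_n \in \mathcal M^\omega(M)$, let $(a_n)_n \in \mathcal I^\omega(M)$; I must show $(x_n a_n)_n$ and $(a_n x_n)_n$ lie in $\mathcal I^\omega(M)$. For the left multiplication, $\|x_n a_n\|_\varphi \le \|x_n\|_\infty \|a_n\|_\varphi \to 0$ as $n\to\omega$, and $\|(x_n a_n)^*\|_\varphi = \|a_n^* x_n^*\|_\varphi$, which I bound using $\|a_n^* x_n^*\|_\varphi = \varphi(x_n a_n a_n^* x_n^*)^{1/2}$; here the hypothesis on $\|x_n\varphi - \varphi x_n\|$ (equivalently $\|x_n^*\varphi - \varphi x_n^*\|$ by Lemma \ref{lemma-states}) lets me replace $\varphi(x_n \cdot x_n^*)$ by $\varphi(x_n^* x_n \cdot)$ up to an error controlled by $\|\varphi x_n^* - x_n^*\varphi\| \cdot \|x_n\|_\infty$, reducing to $\|a_n^*\|_\varphi^2 \to 0$. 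The symmetric estimate handles $(a_n x_n)_n$. Once $(x_n)_n \in \mathcal M^\omega(M)$, to see $(x_n)^\omega \in (M^\omega)^{\varphi^\omega}$ it suffices to check $(x_n)^\omega \varphi^\omega = \varphi^\omega (x_n)^\omega$, which follows because for any $(y_n)^\omega \in M^\omega$ one has $\varphi^\omega((x_n)^\omega(y_n)^\omega) - \varphi^\omega((y_n)^\omega(x_n)^\omega) = \lim_{n\to\omega}(\varphi(x_n y_n) - \varphi(y_n x_n))$, and $|\varphi(x_n y_n) - \varphi(y_n x_n)| \le \|x_n \varphi - \varphi x_n\| \, \|y_n\|_\infty \to 0$.

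\emph{Item (3).} Given a projection $e \in M^\omega$, lift it to any self-adjoint representative: choose $(y_n)_n \in \mathcal M^\omega(M)$ with $e = (y_n)^\omega$ and replace $y_n$ by $(y_n + y_n^*)/2$, which still represents $e$ by item (1) since $e = e^*$. Now $y_n^2 - y_n \to 0$ $\ast$-strongly as $n\to\omega$ because its image in $M^\omega$ is $e^2 - e = 0$. Apply continuous functional calculus: let $\chi$ be the indicator of $[1/2,\infty)$ and set $e_n = \chi(y_n)$, a projection in $M$ with $\|e_n\|_\infty \le 1$, so $(e_n)_n \in \ell^\infty(\N,M)$. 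Using that $\chi$ agrees with the identity and can be uniformly approximated by polynomials on a neighborhood of $\operatorname{sp}(y_n)$ that shrinks to $\{0,1\}$ as $n\to\omega$ (since $\|y_n^2 - y_n\|$ controls how far the spectrum is from $\{0,1\}$), one gets $e_n - y_n \to 0$ in norm — hence $\ast$-strongly — as $n\to\omega$. By item (1), $(e_n)_n \in \mathcal M^\omega(M)$ and $(e_n)^\omega = (y_n)^\omega = e$. The one subtlety here is that $\|y_n^2 - y_n\| \to 0$ need not hold along all of $\N$, only as $n\to\omega$; but the functional-calculus estimate $\|\chi(y_n) - y_n\|_\infty \le f(\|y_n^2-y_n\|_\infty)$ for an explicit $f$ with $f(0+) = 0$ is still valid for each $n$ where $\|y_n^2 - y_n\|_\infty < 1/4$, and for the finitely-remaining bad indices near $\omega$ one can redefine $e_n$ arbitrarily (say $e_n = 0$) without affecting the class in $M^\omega$.

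The main obstacle is the norm estimate in item (2): the inequality $\|x_n \varphi - \varphi x_n\| \to 0$ only controls $\varphi$ "commuting past" $x_n$ when tested against the uniform norm, so turning this into the $\ast$-strong (i.e. $\|\cdot\|_\varphi^\sharp$) bound needed to conclude $(x_n a_n)^*\to 0$ in $\|\cdot\|_\varphi$ requires carefully writing $\varphi(x_n a_n a_n^* x_n^*)$ as $\varphi(a_n a_n^* x_n^* x_n) + (\text{error})$ and bounding the error by $\|a_n a_n^*\|_\infty \cdot \|\varphi x_n^* - x_n^* \varphi\| \cdot \|x_n\|_\infty$ — using that $a_n a_n^* x_n^*$ is uniformly bounded and applying Lemma \ref{lemma-states} to pass between $\|x_n\varphi - \varphi x_n\|$ and $\|x_n^*\varphi - \varphi x_n^*\|$. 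This is the step where the precise form of Lemma \ref{lemma-states} is used.
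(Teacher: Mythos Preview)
Your arguments for items (1) and (2) are correct and match the paper's proof essentially line for line; the only cosmetic difference is that for the conclusion $(x_n)^\omega \in (M^\omega)^{\varphi^\omega}$ the paper cites \cite[Lemma 4.35]{AH12}, whereas you verify $x\varphi^\omega = \varphi^\omega x$ directly, which is fine.

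There is, however, a genuine gap in item (3). You assert that $\|y_n^2 - y_n\|_\infty \to 0$ as $n \to \omega$, and from this deduce that $\operatorname{sp}(y_n)$ concentrates near $\{0,1\}$ and hence $\|\chi(y_n) - y_n\|_\infty \to 0$. But the hypothesis $(y_n^2 - y_n)^\omega = 0$ in $M^\omega$ only gives $\ast$-strong convergence, i.e.\ $\|y_n^2 - y_n\|_\varphi^\sharp \to 0$; it does \emph{not} force operator-norm convergence. For a concrete counterexample take $M = \rL^\infty[0,1]$, $\varphi$ Lebesgue integration, and $y_n = \mathbf 1_{[0,1-1/n]} + \tfrac12 \mathbf 1_{(1-1/n,1]}$: then $(y_n)^\omega = 1$ is a projection, yet $\|y_n^2 - y_n\|_\infty = \tfrac14$ for every $n$. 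So your spectral-concentration argument breaks down.

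The paper avoids this by working entirely in $\|\cdot\|_\varphi$: it sets $\varepsilon_n = \|y_n - y_n^2\|_\varphi$, takes $e_n = \mathbf 1_{[1-\sqrt{\varepsilon_n},1]}(y_n)$, and invokes \cite[Lemma 1.1.5]{Co75a} for the $\rL^2$-estimate $\|y_n - e_n\|_\varphi^\sharp \to 0$. Your choice $e_n = \mathbf 1_{[1/2,\infty)}(y_n)$ can in fact be salvaged by a direct $\rL^2$ functional-calculus inequality: since $|\mathbf 1_{[1/2,\infty)}(t) - t| \leq 2|t^2 - t|$ for all $t \in \R$, one gets $(e_n - y_n)^2 \leq 4(y_n^2 - y_n)^2$ in the operator order, whence $\|e_n - y_n\|_\varphi \leq 2\|y_n^2 - y_n\|_\varphi \to 0$. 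But this is an $\rL^2$ argument, not the norm argument you sketched.
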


\begin{proof}
$(1)$ Let $(x_n)_n \in \mathcal M^\omega(M)$ and $(y_n)_n \in \ell^\infty(\N, M)$ such that $x_n - y_n \to 0$ $\ast$-strongly as $n \to \omega$. Then $(y_n - x_n)_n \in \mathcal I^\omega(M) \subset \mathcal M^\omega(M)$ and hence $(y_n)_n = (y_n - x_n)_n + (x_n)_n \in \mathcal M^\omega(M)$. Moreover, by the definition of the ultraproduct von Neumann algebra $M^\omega$, we have $(x_n)^\omega = (y_n)^\omega \in M^\omega$.

$(2)$ Let $(x_n)_n \in \ell^\infty(\N, M)$ such that $\lim_{n \to \omega} \|x_n \varphi - \varphi x_n\| = 0$. Let $(b_n)_n \in \mathcal I^\omega(M)$. We may assume that $\max \{\|x_n\|_\infty, \|b_n\|_\infty : n \in \N\} \leq 1$. Using the Cauchy-Schwarz inequality, for all $n \in \N$, we have
\begin{align*}
(\|x_n b_n\|_\varphi^\sharp)^2 &= \varphi(b_n^* \, x_n^* x_n b_n) + \varphi(x_n \, b_n b_n^* x_n^*) \\
& \leq \|b_n\|_\varphi \, \|x_n^* x_n b_n\|_\varphi + |(x_n \varphi - \varphi x_n)(b_n b_n^* x_n^*)| + |\varphi(b_n \, b_n^* x_n^* x_n)| \\
& \leq \|b_n\|_\varphi + \|x_n \varphi - \varphi x_n\| \, \|b_n b_n^* x_n^*\|_\infty + \|b_n^*\|_\varphi \, \|b_n^* x_n^* x_n\|_\varphi \\
& \leq \|b_n\|_\varphi + \|x_n \varphi - \varphi x_n\| + \|b_n^*\|_\varphi.
\end{align*}
Therefore, we obtain $\lim_{n \to \omega} \|x_n b_n\|_\varphi^\sharp = 0$ and so $(x_n b_n)_n \in \mathcal I^\omega(M)$. Likewise, for all $n \in \N$, we have
\begin{align*}
(\|b_n x_n\|_\varphi^\sharp)^2 &= \varphi(x_n^* \, b_n^* b_n x_n) + \varphi(b_n \, x_n x_n^* b_n^*) \\
& \leq |(x_n^* \varphi - \varphi x_n^*)(b_n^* b_n x_n)| + |\varphi(b_n^* \, b_n x_n x_n^*)| + \|b_n^*\|_\varphi \, \|x_n x_n^* b_n^*\|_\varphi \\
& \leq \|x_n^* \varphi - \varphi x_n^*\| \, \|b_n^* b_n x_n\|_\infty+ \|b_n\|_\varphi \, \|b_n x_n x_n^*\|_\varphi + \|b_n^*\|_\varphi \\
& \leq \|x_n \varphi - \varphi x_n\| + \|b_n\|_\varphi + \|b_n^*\|_\varphi.
\end{align*}
Therefore, we obtain $\lim_{n \to \omega} \|b_n x_n\|_\varphi^\sharp = 0$ and so $(b_n x_n)_n \in \mathcal I^\omega(M)$. This shows that $(x_n)_n \in \mathcal M^\omega(M)$. Moreover, $x = (x_n)^\omega \in (M^\omega)^{\varphi^\omega}$ by \cite[Lemma 4.35]{AH12}.

$(3)$ The proof is identical to the one of \cite[Proposition 1.1.3]{Co75a}. Let $e \in M^\omega$ be any projection. We may choose a sequence $(x_n)_n \in \mathcal M^\omega(M)$ such that $\|x_n\|_\infty \leq 1$ for all $n \in \N$ and $e = (x_n)^\omega$. Put $y_n = x_n^* x_n$ for all $n \in \N$. Since $e = e^* e$, we have $\lim_{n \to \omega} \|x_n - y_n\|_\varphi^\sharp = 0$, $(y_n)_n \in \mathcal M^\omega(M)$ and $e = (y_n)^\omega$. Since $e = e^2$, we moreover have $\lim_{n \to \omega} \|y_n - y_n^2\|_\varphi^\sharp = 0$. Put $\varepsilon_n = \|y_n - y_n^2\|_\varphi$. Letting $e_n = \mathbf 1_{[1 - \sqrt{\varepsilon_n}, 1]}(y_n) \in M$ for all $n \in \N$, we have $\lim_{n \to \omega} \| y_n - e_n \|_\varphi^\sharp = 0$ by \cite[Lemma 1.1.5]{Co75a}. It follows that $(e_n)_n \in \mathcal M^\omega(M)$ and $e = (e_n)^\omega \in M^\omega$ by item (1) of the proposition.
\end{proof}

The next proposition will be useful to prove Corollary \ref{corB}.

\begin{prop}\label{gamma}
Let $M$ be any factor with separable predual and $Q \subset M$ any irreducible subfactor with expectation. Then, either $Q' \cap M^\omega = \C$ or $Q' \cap M^\omega$ is diffuse.
\end{prop}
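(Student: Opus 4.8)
The plan is to prove the dichotomy in the form: if $N:=Q'\cap M^\omega$ is not diffuse, then $N=\C$. Since a non-diffuse von Neumann algebra contains a nonzero minimal projection $p$, and since $p=1$ would force $N=pNp=\C p=\C$, the whole point is to rule out the existence of a nonzero minimal projection $p\neq 1$ in $N$. To set things up I would fix a faithful normal state $\psi$ on $Q$ and put $\varphi=\psi\circ E_Q\in M_*$, so that $Q$ is globally $(\sigma^\varphi_t)$-invariant; passing to $(M^\omega,\varphi^\omega)$ one checks, using Proposition \ref{proposition-ultraproduct}(2) and the fact that $\sigma^{\varphi^\omega}_t$ restricts to $\sigma^\varphi_t$ on $M$, that $N$ is globally $(\sigma^{\varphi^\omega}_t)$-invariant, hence with expectation in $M^\omega$, and that $E_{Q^\omega}$ maps $N$ onto $\mathcal Z(Q^\omega)\subset N$. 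Two cheap reductions are then available. First, $M'\cap M^\omega\subset N$ (because $M'\subset Q'$) and $Q'\cap Q^\omega\subset N$ (because $Q^\omega\subset M^\omega$), and each of these is with expectation in $N$; so by Proposition \ref{diffuse-centralizer}(2) it suffices to treat the case where neither $M'\cap M^\omega$ nor $Q'\cap Q^\omega$ is diffuse, i.e.\ where both $M$ and $Q$ are ``full-like'' on the level of central sequences. Second, $\mathcal Z(Q^\omega)\subset N$ is with expectation, so if $\mathcal Z(Q^\omega)$ is diffuse we are done again by Proposition \ref{diffuse-centralizer}(2); hence we may assume $\mathcal Z(Q^\omega)$ is not diffuse either.

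The heart of the argument is then a divisibility statement for projections in $N$: I claim that if $p=(p_n)^\omega\in N$ (with each $p_n\in M$ a projection, as provided by Proposition \ref{proposition-ultraproduct}(3)) satisfies $0<\varphi^\omega(p)=\lim_{n\to\omega}\varphi(p_n)=t<1$, then $p$ is not minimal, because one can produce a projection $q\in N$ with $0\neq q<p$. The candidate $q=(q_n)^\omega$ is built from subprojections $q_n\leq p_n$ in the factor $M$, chosen so that $0<\liminf_{n\to\omega}\varphi(q_n)<t$ while still $\lim_{n\to\omega}\|[q_n,u]\|^\sharp_\varphi=0$ for every unitary $u$ in a fixed generating set of $Q$. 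To construct such $q_n$ one exploits simultaneously: (i) the factoriality of $M$, which gives room inside $p_nMp_n$ (e.g.\ $p_n$ is equivalent to a proper subprojection of itself with any prescribed trace-ratio) and lets one average/compare projections; and (ii) the structure of $Q$ as a diffuse factor, so that by Proposition \ref{diffuse-centralizer}(1) there is a unitary $u\in\mathcal U(Q)$ with $u^k\to 0$ weakly, which is used to ``wash out'' in the $\|\cdot\|^\sharp_\varphi$-limit the part of the chosen compressions that fails to commute with $Q$. The modular covariance from Proposition \ref{modular-analytic} is used to pass from $\varphi$-orthogonality to commutation estimates uniformly in the relevant analytic elements. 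Once such $q$ is exhibited, minimality of $p$ is violated, so $t$ cannot lie in $(0,1)$; as $\varphi^\omega$ is faithful this forces $\varphi^\omega(p)=1$, hence $p=1$ and $N=\C$.

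I expect step two — producing the genuine subprojection $q_n\leq p_n$ that still asymptotically commutes with $Q$ — to be the main obstacle, and indeed the delicate point of the proof: a naive choice of $q_n$ (e.g.\ an arbitrary subprojection of $p_n$ of the right size) need not remain in the relative commutant after passing to the ultraproduct, so the construction must be done with care, playing the $Q$-asymptotically-central cutting against the irreducibility $Q'\cap M=\C$ and the factoriality of $M$ at the same time. The final write-up would then simply assemble the case distinction: either $N$ has no minimal projection, in which case $N$ is diffuse by definition, or it has one, in which case the above shows $N=\C$.
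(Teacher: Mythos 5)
There is a genuine gap, and it sits exactly at the point you yourself flag as ``the main obstacle'': the construction of a nonzero proper subprojection $q=(q_n)^\omega\in N:=Q'\cap M^\omega$ of a given projection $p=(p_n)^\omega\in N$ with $0<\varphi^\omega(p)<1$ is never carried out, and that construction is the entire content of the proposition --- everything else in your outline (the reduction to minimal projections, the modular invariance of $N$, the ``cheap reductions'') is routine. Moreover, the ingredients you actually describe for building $q_n\leq p_n$ (factoriality of $M$ to prescribe the size of the cut, a unitary $u\in\mathcal U(Q)$ with $u^k\to 0$ weakly to ``wash out'' the non-commuting part, modular covariance) cannot suffice on their own: if one drops irreducibility the statement is false --- take $Q$ a full ${\rm II_1}$ factor, e.g.\ $\rL(\F_2)$, and $M=Q\ovt\mathbf M_2(\C)$; then $Q'\cap M^\omega=\mathbf M_2(\C)$, which is neither $\C$ nor diffuse --- so any successful construction must exploit $Q'\cap M=\C$ quantitatively, and your sketch only mentions irreducibility as something to ``play against'' without indicating any mechanism through which it enters. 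A secondary point you leave untouched is the passage between the centralizer and the full relative commutant: the trace-like bookkeeping and compactness arguments one would want are only available on $Q'\cap(M^\omega)^{\varphi^\omega}$, and the paper needs \cite[Lemma 5.4]{AH12} together with Proposition \ref{diffuse-centralizer} to transfer the dichotomy from the centralizer to $Q'\cap M^\omega$.

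For comparison, the paper does not attempt to subdivide any projection. Irreducibility is used through the single observation that a projection $e=(e_n)^\omega$ in $\mathcal Q:=Q'\cap(M^\omega)^{\varphi^\omega}$ has $e_n\to\lambda 1$ $\sigma$-weakly, where $\lambda=\varphi^\omega(e)$. Taking $e$ to be the maximal central projection with $\mathcal Qe$ discrete, a re-indexing (diagonal) argument on the representing sequence of $e$, following \cite[Lemma 2.7]{Io12}, manufactures projections $f_m\in\mathcal Q$ with $\varphi^\omega(ef_i)=\lambda^2$ and $\varphi^\omega(ef_if_j)=\lambda^3$; the $\|\cdot\|_{\varphi^\omega}$-compactness of the unit ball of the discrete corner $\mathcal Qe$ then forces $\lambda^2=\lambda^3$, hence $e\in\{0,1\}$, and a second separation-versus-compactness argument (again powered by $E_M(e)=\lambda1$) rules out a nontrivial discrete $\mathcal Q$. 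If you wish to salvage your route, you would need an explicit way of turning the weak convergence $p_n\to\varphi^\omega(p)1$ (or some other concrete consequence of $Q'\cap M=\C$) into the asymptotically $Q$-central subprojections $q_n\leq p_n$; as written, no such argument is given, so the proposal is a strategy outline rather than a proof.
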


\begin{proof}
Denote by $E_Q : M \to Q$ the faithful normal conditional expectation. Choose a faithful normal state on $Q$ and still denote by $\varphi$ the faithful normal state $\varphi \circ E_Q$ on $M$. Since $Q$ is globally invariant under the modular automorphism group $(\sigma_t^\varphi)$ and since $\sigma_t^{\varphi^\omega}(x) = \sigma_t^\varphi(x)$ for all $x \in M$, the relative commutant $Q' \cap M^\omega$ is globally invariant under the modular automorphism group $(\sigma_t^{\varphi^\omega})$. Hence $(Q' \cap M^\omega)^{\varphi^\omega} = (Q' \cap M^\omega) \cap (M^\omega)^{\varphi^\omega} = Q' \cap (M^\omega)^{\varphi^\omega} $.

\begin{newclaim}
Either $Q' \cap (M^\omega)^{\varphi^\omega} = \C$ or $Q' \cap (M^\omega)^{\varphi^\omega}$ is diffuse.
\end{newclaim}

\begin{proof}[Proof of the Claim]
We use the proof of \cite[Lemma 2.7]{Io12}. Put $\mathcal Q = Q' \cap (M^\omega)^{\varphi^\omega}$ and denote by $e \in \mathcal Z(\mathcal Q)$ the maximum central projection in $\mathcal Q$ such that $\mathcal Q e$ is discrete. We may represent $e = (e_n)^\omega$ by a sequence of projections $(e_n)_n \in \mathcal M^\omega(M)$. Put $\lambda = \varphi^\omega(e) = \lim_{n \to \omega} \varphi(e_n)$. Since $Q' \cap M = \C$, we have $e_n \to \lambda 1$ $\sigma$-weakly as $n \to \omega$.

Next, we construct by induction a sequence of projections $(f_m)_{m \geq 1}$ in $\mathcal Q$ such that 
\begin{equation}\label{equality-lambda}
\varphi^\omega(ef_i) = \lambda^2 \; \text{ and } \;  \varphi^\omega(ef_i f_j) = \lambda^3, \forall 1 \leq i < j.
\end{equation}
Indeed, assume that $f_1, \dots, f_m \in \mathcal Q$ have been constructed. For every $1 \leq j \leq m$, represent $f_j = (f_{j,n})^\omega$ by a sequence of projections $(f_{j, n})_n \in \mathcal M^\omega(M)$. Let $(x_i)_{i \in \N}$ be a $\|\cdot\|_\varphi^\sharp$-dense sequence in $\Ball(Q)$. Since $e = (e_n)^\omega \in (M^\omega)^{\varphi^\omega}$, since $\lim_{n \to \omega} \|e_n x_i - x_i e_n\|_\varphi^\sharp = 0$ for all $i \in \N$ and since $e_n \to \lambda 1$ $\sigma$-weakly as $n \to \omega$, we can find an increasing sequence $(k_n)_n$ in $\N$ such that for every $n \geq 1$,  we have
\begin{enumerate}
\item [(P1)] $\|e_{k_n} \varphi - \varphi e_{k_n}\| \leq \frac1n$, 
\item [(P2)] $\|e_{k_n} x_i - x_i e_{k_n}\|_\varphi^\sharp \leq \frac1n$ for all $1 \leq i \leq n$,
\item [(P3)] $|\varphi(e_n e_{k_n}) - \lambda \varphi(e_n)| \leq \frac1n$ and
\item [(P4)] $|\varphi(e_n f_{j, n} e_{k_n}) - \lambda \varphi(e_n f_{j, n})| \leq \frac1n$ for all $1 \leq j \leq m$.
\end{enumerate}

Property (P1) together with Proposition \ref{proposition-ultraproduct} imply that the sequence $(e_{k_n})_n$ lies in the multiplier algebra $\mathcal M^\omega(M)$ and $f = (e_{k_n})^\omega \in (M^\omega)^{\varphi^\omega}$. Property (P2) implies that $x_i f = f x_i$ for all $i \in \N$. Since $\{x_i : i \in \N\}$ is $\ast$-strongly dense in $\Ball(Q)$, we obtain that $f \in Q' \cap (M^\omega)^{\varphi^\omega} = \mathcal Q$. Finally, Property (P3) implies that $\varphi^\omega(ef) = \lambda \varphi^\omega(e) = \lambda^2$ and Property (P4) together with the induction hypothesis imply that $\varphi^\omega(ef_j f) = \lambda \varphi^\omega(e f_j) = \lambda^3$ for all $1 \leq j \leq m$. We can now put $f_{m + 1} = f$. This finishes the proof of the induction.

Define $p_m = f_m e$ which is a projection in $\mathcal Qe$. Observe that since $\mathcal Q e$ is a discrete tracial von Neumann algebra, $\mathcal Q e$ is $\ast$-isomorphic to a countable direct sum of finite dimensional factors and hence its unit ball $\Ball(\mathcal Q e)$ is $\|\cdot\|_{\varphi_e^\omega}$-compact, where $\varphi_e^\omega = \frac{\varphi^\omega(e \cdot e)}{\varphi^\omega(e)}$. Thus, we may choose a subsequence $(p_{m_k})_{k \geq 1}$ which is $\|\cdot\|_{\varphi_e^\omega}$-convergent in $\Ball(\mathcal Qe)$. By Cauchy-Schwarz inequality, for all $1 \leq j < k$, we have
$$|\varphi_e^\omega(p_{m_j} p_{m_k}) - \varphi_e^\omega(p_{m_j}) | = |\varphi_e^\omega(p_{m_j}(p_{m_k} - p_{m_j}))| \leq \|p_{m_j} - p_{m_k}\|_{\varphi_e^\omega}.$$
Taking the limit as $(j, k) \to \infty$ and using (\ref{equality-lambda}), we obtain $\lambda^2 = \lambda^3$. Therefore $\lambda \in \{ 0, 1 \}$ and so $e \in \{ 0, 1 \}$.

This implies that either $e = 0$ and $\mathcal Q$ is diffuse or $e = 1$ and $\mathcal Q$ is a discrete tracial von Neumann algebra. In the case when $\mathcal Q$ is a discrete tracial von Neumann algebra, we show that $\mathcal Q = \C$. Assume by contradiction that $\mathcal Q$ is a discrete tracial von Neumann algebra and that $\mathcal Q \neq \C$. 

Denote by $E_M : M^\omega \to M$ the canonical faithful normal conditional expectation. Recall  that $\varphi \circ E_M = \varphi^\omega$. Since $\mathcal Q \neq \C$, we may choose a projection $e \in \mathcal Q$ satisfying $\varphi^\omega(e) = \lambda$ with $\lambda \neq 0, 1$. We may represent $e = (e_n)^\omega  \in \mathcal Q$ by a sequence of projections $(e_n)_n \in \mathcal M^\omega(M)$. Observe that $E_M(e) = \lambda 1 = \sigma \text{-weak} \lim_{n \to \omega} e_n$. Then for all $y \in \Ball(M)$, we have
$$\|e - y\|_{\varphi^\omega} \geq \|e - E_M(e)\|_{\varphi^\omega} = \sqrt{\lambda - \lambda^2} > 0.$$

Put $\varepsilon = \frac{\sqrt{\lambda - \lambda^2}}{2}$. Put $e_1 = e \in \mathcal Q$. Next, we construct by induction a sequence of projections $e_m \in \mathcal Q$ such that $\|e_p - e_q\|_{\varphi^\omega} \geq \varepsilon$ for all $p, q \geq 1$ such that $p \neq q$. Assume that $e_1, \dots, e_{m} \in \mathcal Q$ have been constructed. For every $1 \leq j \leq m$, represent $e_j = (e_{j,n})^\omega$ by a sequence of projections $(e_{j, n})_n \in \mathcal M^\omega(M)$. Let $(x_i)_{i \in \N}$ be a $\|\cdot\|_\varphi^\sharp$-dense sequence in $\Ball(Q)$. Since $e = (e_n)^\omega \in (M^\omega)^{\varphi^\omega}$, since $\lim_{k \to \omega} \|e_k x_i - x_i e_k\|_\varphi^\sharp = 0$ for all $i \in \N$ and since $\lim_{k \to \omega} \|e_k - e_{j, n}\|_\varphi =  \|e - e_{j, n}\|_{\varphi^\omega} \geq 2 \varepsilon$ for all $1 \leq j \leq m$ and all $n \in \N$, we can find an increasing sequence $(k_n)_n$ in $\N$ such that for every $n \geq 1$,  we have
\begin{enumerate}
\item [(P1)] $\|e_{k_n} \varphi - \varphi e_{k_n}\| \leq \frac1n$, 
\item [(P2)] $\|e_{k_n} x_i - x_i e_{k_n}\|_\varphi^\sharp \leq \frac1n$ for all $1 \leq i \leq n$ and
\item [(P3)] $\|e_{k_n} - e_{j, n}\|_{\varphi} \geq \varepsilon$ for all $1 \leq j \leq m$.
\end{enumerate}

By the same reasoning as before, Properties (P1) and (P2) imply that $(e_{k_n})_n \in \mathcal M^\omega(M)$ and $f = (e_{k_n})^\omega \in \mathcal Q$. Moreover, Property (P3) implies that $\| f - e_j\|_{\varphi^\omega} \geq \varepsilon$ for all $1 \leq j \leq m$. We can now put $e_{m + 1} = f$. This finishes the proof of the induction.

So, we have constructed a sequence of projections $e_m \in \mathcal Q$ such that $\|e_p - e_q\|_{\varphi^\omega} \geq \varepsilon$ for all $p, q \geq 1$ such that $p \neq q$. This however contradicts the fact that $\Ball(\mathcal Q)$ is $\|\cdot\|_{\varphi^\omega}$-compact and finishes the proof of the Claim.
\end{proof}

Assume that $Q' \cap (M^\omega)^{\varphi^\omega} = \C$. Then by \cite[Lemma 5.4]{AH12}, we have that $Q' \cap M^\omega = \C$ or $Q' \cap M^\omega$ is a type ${\rm III_1}$ factor. Next, assume that $Q' \cap (M^\omega)^{\varphi^\omega}$ is diffuse. Then, using Proposition \ref{diffuse-centralizer}, we have that $Q' \cap M^\omega$ is diffuse. Therefore, either $Q' \cap M^\omega = \C$ or $Q' \cap M^\omega$ is diffuse. 
\end{proof}

\begin{prop}\label{gamma-amenable}
For every diffuse amenable von Neumann algebra $M$ with separable predual, the central sequence algebra $M' \cap M^\omega$ is diffuse.
\end{prop}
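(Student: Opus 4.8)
The plan is to reduce to the tracial case by passing to the centralizer. Fix a faithful normal state $\varphi$ on $M$ with the property, guaranteed by Proposition \ref{diffuse-centralizer}(1), that the centralizer $M^\varphi$ is diffuse. Since $M$ is amenable, $M^\varphi$ is amenable, and $M^\varphi$ is tracial (the restriction of $\varphi$ is a trace on it); hence $M^\varphi$ is a diffuse amenable tracial von Neumann algebra. By the uniqueness of the hyperfinite ${\rm II_1}$ factor and the structure of amenable tracial algebras, $M^\varphi$ contains a diffuse abelian subalgebra $A \cong \rL^\infty(\mathbf T)$ with separable predual, and in particular (as in the proof of Proposition \ref{diffuse-centralizer}(1)) a unitary $u \in \mathcal U(M^\varphi)$ with $u^k \to 0$ weakly as $|k| \to \infty$.

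Next I would use amenability of $M$ to produce, inside $M^\omega$, a copy of $A^\omega$ commuting with all of $M$. The point is that since $M$ is amenable, $M^\omega$ is "large" relative to $M$: concretely, by Connes' theorem $M$ is hyperfinite, so one may exhaust $M$ by an increasing sequence of finite-dimensional subalgebras and, more to the point, one can realize any diffuse amenable $M$ as (close to) a tensor product, which yields asymptotically central sequences. The cleanest route: since $M^\varphi$ is the hyperfinite ${\rm II_1}$ factor or a direct sum/integral of such with a diffuse abelian piece, it carries a sequence of Haar unitaries $u_n \in \mathcal U(M^\varphi)$ that is asymptotically central in $M^\varphi$ and satisfies $\varphi(w u_n w') \to 0$ for reduced words $w, w'$; the standard fact (see \cite{Co75a, Co74}) is that hyperfinite factors have property Gamma, so $(M^\varphi)' \cap (M^\varphi)^\omega$ is diffuse. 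Then I would show this central sequence algebra of $M^\varphi$ maps into $M' \cap M^\omega$. Indeed, if $(u_n)_n$ is $\|\cdot\|_\varphi$-asymptotically central in $M^\varphi$ and each $u_n$ is $\varphi$-preserving (lies in the centralizer), then $u_n \varphi = \varphi u_n$ exactly, so by Proposition \ref{proposition-ultraproduct}(2) we get $(u_n)_n \in \mathcal M^\omega(M)$ and $(u_n)^\omega \in (M^\omega)^{\varphi^\omega}$. It remains to check $(u_n)^\omega$ commutes with all of $M$, not just with $M^\varphi$; this follows because a sequence in the centralizer that is asymptotically central in $M^\varphi$ can be upgraded, using amenability (hyperfiniteness of $M$ relative to $M^\varphi$, or directly a Følner/Day-type averaging argument over $M$), to one asymptotically central in $M$. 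Concretely, one takes a hyperfinite exhaustion of $M$ by finite-dimensional $*$-subalgebras $F_1 \subset F_2 \subset \cdots$ and chooses the Haar unitary $u_n$ in the relative commutant $(F_n)' \cap M^\varphi$, which is still diffuse since $M^\varphi$ is diffuse and $F_n$ finite-dimensional.

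The conclusion is then immediate: the subalgebra generated by $(u_n)^\omega$ inside $M' \cap M^\omega$ is a diffuse abelian von Neumann algebra (its spectral picture is that of a Haar unitary, using $\varphi^\omega((u_n)^{\omega k}) = \lim_{n\to\omega}\varphi(u_n^k)$ and $u_n^k \to 0$ weakly), so $M' \cap M^\omega$ is diffuse. The main obstacle is the upgrade from "asymptotically central in $M^\varphi$" to "asymptotically central in $M$": one must genuinely use amenability of $M$ (a general von Neumann algebra with diffuse centralizer need not have property Gamma), and the cleanest implementation is the relative commutant trick with a hyperfinite exhaustion, noting that $(F_n)' \cap M^\varphi$ stays diffuse. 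Everything else — the passage to the centralizer, the multiplier-algebra bookkeeping via Proposition \ref{proposition-ultraproduct}(2), and the verification that a weakly null Haar unitary generates a diffuse algebra — is routine.
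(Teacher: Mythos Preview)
Your approach differs substantially from the paper's. The paper gives a two-line reduction: decompose $M$ over its center as $\bigoplus_n M z_n$, where $M z_0$ has diffuse center and each $M z_n$ ($n \geq 1$) is a diffuse amenable factor with separable predual; for the first summand the inclusion $\mathcal Z(M z_0) \subset (M z_0)' \cap (M z_0)^\omega$ already forces diffuseness, and for the factor summands one simply cites the classification of injective factors \cite{Co72, Co74, Co75b, Co85, Ha84} to conclude that each $(M z_n)' \cap (M z_n)^\omega$ is diffuse. No explicit central sequences are ever constructed.

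Your plan instead tries to manufacture a central Haar unitary by combining a hyperfinite exhaustion $F_1 \subset F_2 \subset \cdots$ of $M$ with the centralizer $M^\varphi$, choosing $u_n \in (F_n)' \cap M^\varphi$. The crucial assertion is that $(F_n)' \cap M^\varphi$ is diffuse ``since $M^\varphi$ is diffuse and $F_n$ finite-dimensional'', and this is where the argument breaks. Nothing forces an arbitrary finite-dimensional $F_n \subset M$ to have large relative commutant \emph{inside $M^\varphi$}: the $F_n$ are not assumed to lie in $M^\varphi$ (they cannot, if $M$ is type ${\rm III}$), nor to be globally $\sigma^\varphi$-invariant, so you have no control over how $F_n'$ meets the centralizer. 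The claim is correct in the tracial case $M = M^\varphi$, where a standard tensor splitting shows $F_n' \cap M$ is finite and diffuse, but precisely in the type ${\rm III}$ case that you are trying to handle it requires choosing the exhaustion compatibly with the modular structure (for instance via an ITPFI realization or a decomposition $M \cong M \mathbin{\overline\otimes} R$). Making that choice rigorous amounts to invoking the same structural results about injective factors that the paper cites directly, so the detour through $M^\varphi$ does not actually buy you anything.
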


\begin{proof}
Let $M$ be any diffuse amenable von Neumann algebra with separable predual. There exists a sequence of pairwise orthogonal projections $z_n \in \mathcal Z(M)$ such that $\sum_{n} z_n = 1$, $M z_0$ is an amenable von Neumann algebra with a diffuse center and separable predual and $M z_n$ is a diffuse amenable factor with separable predual for every $n \geq 1$. It is obvious that $(Mz_0)' \cap (Mz_0)^\omega$ is diffuse. By the classification of amenable factors with separable predual (see \cite{Co72, Co74, Co75b, Co85, Ha84}), $Mz_n$ is hyperfinite and $(M z_n)' \cap (Mz_n)^\omega$ is diffuse for every $n \geq 1$. Therefore $M' \cap M^\omega = \bigoplus_{n} (M z_n)' \cap (M z_n)^\omega$ is diffuse. 
\end{proof}

\subsection*{An elementary fact on $\varepsilon$-orthogonality}

Let $\mathcal H$ be a complex Hilbert space and $\varepsilon \geq 0$. We say that two (not necessarily closed) subspaces $\mathcal K, \mathcal L \subset \mathcal H$ are $\varepsilon$-{\em orthogonal} and we denote by $\mathcal K \perp_\varepsilon \mathcal L$ if
$$|\langle \xi, \eta \rangle_{\mathcal H} | \leq \varepsilon \, \|\xi\|_{\mathcal H} \, \|\eta\|_{\mathcal H}, \; \forall \xi \in \mathcal K, \forall \eta \in \mathcal L.$$

Define the function
$$\delta : \left[0, \frac12 \right) \to \R_+ :  t \mapsto \frac{2 t}{\sqrt{1 - t - \sqrt{2} \, t \sqrt{1 - t}}}.$$ 
We will be using the following elementary fact regarding $\varepsilon$-orthogonality whose proof  can be found in \cite[Proposition 2.3]{Ho12a}.

\begin{prop}[\cite{Ho12a}]\label{projections}
Let $k \geq 1$. Let $0 \leq \varepsilon < 1$ such that $\delta^{\circ (k - 1)}(\varepsilon) < 1$. For all $1 \leq i \leq 2^k$, let $p_i \in \mathbf B(\mathcal H)$ be projections such that $p_i \mathcal H \perp_\varepsilon p_j \mathcal H$ for all $i, j \in \{1, \dots , 2^k\}$ such that $i \neq j$. Write $P_k = \bigvee_{i = 1}^{2^k} p_i$. Then for all $\xi \in \mathcal H$, we have
$$\sum_{i = 1}^{2^k} \|p_i \xi\|_{\mathcal H}^2 \leq \prod_{j = 0}^{k - 1} \left( 1 + \delta^{\circ j}(\varepsilon) \right)^2 \|P_k \xi\|_{\mathcal H}^2.$$
\end{prop}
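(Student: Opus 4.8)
The natural plan is to induct on $k$, reducing a family of $2^k$ projections to one of $2^{k-1}$ by splitting it in half and paying for one application of the function $\delta$ at each doubling. Since $p_i\xi = p_i P_k\xi$ for every $i$, we may and do assume from the outset that $\xi \in P_k\mathcal H$, so $\|P_k\xi\|_{\mathcal H} = \|\xi\|_{\mathcal H}$; equivalently one keeps $P_k\xi$ in place of $\xi$ throughout.

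For the base case $k = 1$, let $p_1, p_2$ be projections with $p_1\mathcal H \perp_\varepsilon p_2\mathcal H$ and $\varepsilon < 1$. Put $T = p_1 + p_2 \geq 0$. For every $\zeta \in \mathcal H$ one has $\langle T\zeta, \zeta\rangle_{\mathcal H} = \|p_1\zeta\|_{\mathcal H}^2 + \|p_2\zeta\|_{\mathcal H}^2$, while $\|T\zeta\|_{\mathcal H}^2 = \|p_1\zeta\|_{\mathcal H}^2 + \|p_2\zeta\|_{\mathcal H}^2 + 2\,\mathrm{Re}\,\langle p_1\zeta, p_2\zeta\rangle_{\mathcal H} \leq (1+\varepsilon)\langle T\zeta, \zeta\rangle_{\mathcal H}$ by $\varepsilon$-orthogonality of the ranges and Cauchy--Schwarz. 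Hence $\|T\zeta\|_{\mathcal H} \leq (1+\varepsilon)\|\zeta\|_{\mathcal H}$, so $\|T\|_{\mathbf B(\mathcal H)} \leq 1+\varepsilon$, and therefore $\|p_1\xi\|_{\mathcal H}^2 + \|p_2\xi\|_{\mathcal H}^2 = \langle T\xi, \xi\rangle_{\mathcal H} \leq (1+\varepsilon)\|\xi\|_{\mathcal H}^2 \leq (1+\varepsilon)^2\|P_1\xi\|_{\mathcal H}^2$. This estimate, applied with an arbitrary parameter in $[0,1)$ in place of $\varepsilon$, is exactly what gets reused in the inductive step.

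Assume the statement for $k-1$, and let $p_1, \dots, p_{2^k}$ be pairwise $\varepsilon$-orthogonal with $\delta^{\circ(k-1)}(\varepsilon) < 1$; note that for this to be a well-defined real number all the iterates $\delta^{\circ j}(\varepsilon)$, $0 \leq j \leq k-2$, must lie in the domain $[0, 1/2)$ of $\delta$. Split $\{1, \dots, 2^k\}$ into halves $A, B$ of size $2^{k-1}$ and set $Q_1 = \bigvee_{i\in A} p_i$, $Q_2 = \bigvee_{i\in B} p_i$, so $Q_1 \vee Q_2 = P_k$. The projections in each half are still pairwise $\varepsilon$-orthogonal and $\delta^{\circ(k-2)}(\varepsilon) < 1$, so the induction hypothesis gives
\[ \sum_{i\in A}\|p_i\xi\|_{\mathcal H}^2 \leq \prod_{j=0}^{k-2}\bigl(1 + \delta^{\circ j}(\varepsilon)\bigr)^2 \|Q_1\xi\|_{\mathcal H}^2 \quad\text{and}\quad \sum_{i\in B}\|p_i\xi\|_{\mathcal H}^2 \leq \prod_{j=0}^{k-2}\bigl(1 + \delta^{\circ j}(\varepsilon)\bigr)^2 \|Q_2\xi\|_{\mathcal H}^2. \]
It remains to prove the \emph{degradation estimate} $Q_1\mathcal H \perp_{\delta^{\circ(k-1)}(\varepsilon)} Q_2\mathcal H$. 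Granting it, the base case applied to the pair $Q_1, Q_2$ (with parameter $\delta^{\circ(k-1)}(\varepsilon) < 1$ and $Q_1 \vee Q_2 = P_k$) yields $\|Q_1\xi\|_{\mathcal H}^2 + \|Q_2\xi\|_{\mathcal H}^2 \leq \bigl(1 + \delta^{\circ(k-1)}(\varepsilon)\bigr)^2\|P_k\xi\|_{\mathcal H}^2$, and summing the two displayed inequalities produces the bound $\prod_{j=0}^{k-1}(1+\delta^{\circ j}(\varepsilon))^2\|P_k\xi\|_{\mathcal H}^2$, closing the induction.

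The degradation estimate is the technical heart, and I would obtain it by iterating an elementary \emph{merging lemma}: if $\mathcal A_1, \mathcal A_2, \mathcal B_1, \mathcal B_2$ are pairwise $\eta$-orthogonal subspaces with $\eta < 1/2$, then $\overline{\mathcal A_1 + \mathcal A_2} \perp_{\delta(\eta)} \overline{\mathcal B_1 + \mathcal B_2}$. Applied to the joins of consecutive pairs in a family of $2^m$ pairwise $\eta$-orthogonal subspaces, this shows the $2^{m-1}$ such joins are again pairwise $\delta(\eta)$-orthogonal. Starting from the $2^k$ subspaces $p_i\mathcal H$ and the parameter $\varepsilon$ and iterating $k-1$ times, the parameter runs through $\varepsilon, \delta(\varepsilon), \dots, \delta^{\circ(k-1)}(\varepsilon)$ — every intermediate value staying in $[0,1/2)$ as noted, so the merging lemma always applies — and one is left with the two subspaces $Q_1\mathcal H$ and $Q_2\mathcal H$, now $\delta^{\circ(k-1)}(\varepsilon)$-orthogonal. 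To prove the merging lemma: for $\xi \in \overline{\mathcal A_1 + \mathcal A_2}$, the $\eta$-orthogonality of $\mathcal A_1, \mathcal A_2$ (with $\eta < 1$) forces $\mathcal A_1 + \mathcal A_2$ to be closed and gives, for any decomposition $\xi = \alpha_1 + \alpha_2$ with $\alpha_i \in \mathcal A_i$, a bound $\|\alpha_1\|_{\mathcal H} + \|\alpha_2\|_{\mathcal H} \leq c(\eta)\|\xi\|_{\mathcal H}$; doing the same for $\overline{\mathcal B_1 + \mathcal B_2}$, expanding $\langle \xi, \zeta\rangle_{\mathcal H}$ into its four cross terms and bounding each by $\eta$ times the product of the norms of the pieces, one arrives at an estimate whose sharp form, after the relevant one- or two-variable optimization, is exactly $\delta(\eta)$. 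The main obstacle is precisely this last step: carrying out the optimization so that the resulting constant is the prescribed $\delta(t) = 2t/\sqrt{1 - t - \sqrt 2\, t\sqrt{1-t}}$ rather than some cruder function, while keeping track of the domain conditions $\delta^{\circ j}(\varepsilon) < 1/2$ that legitimize all the iterates.
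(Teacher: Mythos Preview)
The paper gives no proof of this proposition; it simply records the statement and cites \cite[Proposition 2.3]{Ho12a} for the argument, so there is nothing in the present paper to compare your proposal against directly.

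That said, your scheme is exactly the one carried out in \cite{Ho12a}: induct on $k$ by splitting the $2^k$ projections into two halves, apply the induction hypothesis to each half to bound $\sum_{i\in A}\|p_i\xi\|^2$ and $\sum_{i\in B}\|p_i\xi\|^2$ by $\prod_{j=0}^{k-2}(1+\delta^{\circ j}(\varepsilon))^2$ times $\|Q_1\xi\|^2$ and $\|Q_2\xi\|^2$ respectively, and then feed $\|Q_1\xi\|^2+\|Q_2\xi\|^2\le (1+\delta^{\circ(k-1)}(\varepsilon))^2\|P_k\xi\|^2$ into the product. Your base case $k=1$ via $\|p_1+p_2\|\le 1+\varepsilon$ is correct, and the inductive bookkeeping is clean.

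The one place you explicitly leave open is the merging lemma --- that four pairwise $\eta$-orthogonal subspaces with $\eta<1/2$ yield two joins that are $\delta(\eta)$-orthogonal --- and you correctly identify this as the technical heart. This is precisely what \cite[Proposition 2.3]{Ho12a} proves, by the decomposition-and-optimization argument you outline; the specific form of $\delta$ arises from bounding $\|\alpha_1\|^2+\|\alpha_2\|^2$ in terms of $\|\alpha_1+\alpha_2\|^2$ under $\eta$-orthogonality and then optimizing the cross-term estimate. Since the present paper defers this computation to \cite{Ho12a} as well, your sketch is as complete as the paper's own treatment, and the structure is the same.
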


\section{Asymptotic orthogonality in the ultraproduct framework}\label{asymptotic}

The key result of the paper is the following generalization of Popa's result \cite[Lemma 2.1]{Po83} regarding asymptotic orthogonality for free group factors to arbitrary free product von Neumann algebras. There are mainly two difficulties that arise in generalizing Popa's result \cite[Lemma 2.1]{Po83} to the setting of arbitrary free product von Neumann algebras. The first main difficulty is that the free product von Neumann algebra $(M, \varphi) = (M_1, \varphi_1) \ast (M_2, \varphi_2)$ is no longer assumed to be tracial. Hence, we need to work in the ultraproduct von Neumann algebra framework and carefully approximate elements in $M$ in the $\sigma$-strong topology by finite linear combinations of reduced words which are analytic with respect to the modular automorphism group $(\sigma_t^\varphi)$ (see also the proof of \cite[Proposition 3.5]{Ue11} where a similar method is used). The second main difficulty is that unlike the case of the free group factors, $M$ is no longer assumed to have a nice basis of unitary elements. To circumvent this issue, we will use $\varepsilon$-orthogonality techniques from \cite{Ho12a, Ho12b}.

\begin{thm}\label{key-thm}
Let $(M_1, \varphi_1)$ and $(M_2, \varphi_2)$ be $\sigma$-finite von Neumann algebras endowed with faithful normal states. Assume that the centralizer $M_1^{\varphi_1}$ is diffuse.  Denote by $(M, \varphi) = (M_1, \varphi_1) \ast (M_2, \varphi_2)$ the free product von Neumann algebra. 

Let $u \in \mathcal U(M_1^{\varphi_1})$ be any unitary such that $u^k \to 0$ weakly as $|k| \to \infty$. For every $x \in \{u\}' \cap M^\omega$ and every $y \in M \ominus M_1$, the elements $y(x - E_{M_1^\omega}(x))$, $(x - E_{M_1^\omega}(x))y$ and $y E_{M_1^\omega}(x) - E_{M_1^\omega}(x)y$ are pairwise $\varphi^\omega$-orthogonal in $M^\omega$.
\end{thm}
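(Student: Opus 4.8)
The plan is to follow Popa's original strategy for \cite[Lemma 2.1]{Po83}, but carried out in the ultraproduct von Neumann algebra $M^\omega$ and with $\varepsilon$-orthogonality replacing the clean orthogonal basis available in the free group factor case. Write $a = x - E_{M_1^\omega}(x)$ and $b = E_{M_1^\omega}(x)$, so $x = a + b$ with $b \in M_1^\omega$ and $a \in M^\omega \ominus M_1^\omega$ (meaning $E_{M_1^\omega}(a) = 0$). The three vectors to be shown pairwise orthogonal are $ya$, $ay$ and $yb - by$. By density it suffices to treat $y$ a finite linear combination of reduced words that are analytic with respect to $(\sigma_t^\varphi)$, and then $y \in M \ominus M_1$ means each such reduced word ends and/or begins with a letter from $M_2 \ominus \C$; using Proposition \ref{modular-analytic}(3) one reduces the $\varphi^\omega$-orthogonality statements to statements about $\varphi^\omega$ of products of reduced words against $a$ and $b$, where the analyticity lets one move modular operators across without changing the support structure of reduced words. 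Represent $x = (x_n)^\omega$, $a = (a_n)^\omega$, $b = (b_n)^\omega$ by bounded sequences with $a_n = x_n - E_{M_1}(x_n)$, $b_n = E_{M_1}(x_n)$, and note that $\|u^k x_n - x_n u^k\|_\varphi^\sharp \to 0$ as $n \to \omega$ for every fixed $k$, since $x \in \{u\}' \cap M^\omega$.

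The heart of the argument is the asymptotic commutation with the powers $u^k$. Fix a reduced-word representative and expand $\varphi^\omega$ of the relevant product; insert $x_n = u^{-k}(u^k x_n) \approx u^{-k} x_n u^k$ and use freeness to compute how conjugation by $u^k \in M_1^{\varphi_1}$ acts on the reduced-word part of $y$. Because $u \in M_1^{\varphi_1}$ and $y$'s letters from $M_2 \ominus \C$ are untouched while the $M_1 \ominus \C$ letters get conjugated, freeness forces the cross terms to involve $\varphi_1$ of things like $u^{-k} m u^k$ averaged against the rest; the key point is that since $u^k \to 0$ weakly, one can choose a single diagonal sequence $k = k_n \to \infty$ along which all the finitely many relevant scalar quantities (there are only countably many reduced words to control, so a diagonalization over a $\|\cdot\|$-dense countable set works) decay. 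This is exactly the place where the $\varepsilon$-orthogonality machinery of Proposition \ref{projections} and the techniques of \cite{Ho12a, Ho12b} enter: rather than an exact orthogonal decomposition of $\rL^2(M,\varphi)$ into $u^k$-translates of a fixed subspace, one gets a family of $\varepsilon$-orthogonal subspaces $u^{k} \mathcal K u^{-k}$ (for a suitable subspace $\mathcal K$ attached to the reduced-word structure of $y$), applies Proposition \ref{projections} to bound the sum of squared projections, and concludes that the "diagonal" contribution that would obstruct orthogonality must vanish in the limit.

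Concretely, I would (i) set up the reduced-word decomposition $\rL^2(M,\varphi) = \C \oplus \bigoplus \rL^2(M_{i_1}\ominus\C)\otimes\cdots$ and identify, for a fixed analytic reduced word $y$, the finitely many "types" of reduced words that can appear in $y^* \cdot$ (a word), $\cdot y^*$ and $[y^*,\cdot]$ when paired against $\rL^2(M_1)$ and against $\rL^2(M)\ominus\rL^2(M_1)$; (ii) show using freeness and Proposition \ref{modular-analytic}(2)--(3) that computing $\langle ya_n, ay_n\rangle_\varphi$-type quantities reduces to $\varphi_1$-integrals of $u^{-k}(\text{letter})u^{k}$ times bounded factors; (iii) invoke the weak decay $u^k \to 0$ together with the $\varepsilon$-orthogonality estimate (Proposition \ref{projections}, after choosing $k$ large so $\delta^{\circ(j)}(\varepsilon)$ is small) to kill these integrals along a diagonal sequence $k_n$; and (iv) pass to the limit $n \to \omega$ using $\|u^{k_n} x_n - x_n u^{k_n}\|_\varphi^\sharp \to 0$, $\lim_\omega \varphi^\omega$-continuity, and $\|x_n\|_\infty$ boundedness, to obtain the three orthogonality relations simultaneously.

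I expect the main obstacle to be step (ii)--(iii): controlling the reduced-word bookkeeping when $y$ is a general element of $M \ominus M_1$ rather than a single word — one must handle words that begin and end with $M_2 \ominus \C$ letters, words that begin with $M_1 \ominus \C$ and end with $M_2 \ominus \C$, etc., and for each, the conjugation by $u^k$ can merge or fail to merge adjacent letters depending on whether $\varphi_1(u^{-k} m u^k)$ is nonzero, which is precisely where weak convergence $u^k \to 0$ must be leveraged uniformly over the finitely many letters occurring in $y$. Handling this uniformly — and ensuring the diagonal sequence $k_n$ works for all three bilinear expressions at once and for a countable dense set of $y$'s — is the delicate combinatorial core; the ultraproduct and analyticity layers, by contrast, are routine once Propositions \ref{modular-analytic} and \ref{proposition-ultraproduct} are in hand.
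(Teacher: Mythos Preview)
Your proposal assembles the right ingredients --- Popa's strategy, analytic reduced-word approximation of $y$, the $\varepsilon$-orthogonality proposition, the ultraproduct framework --- but the way you plan to deploy them has a genuine gap in step (ii) and misses the structural decomposition that actually makes the argument work.

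The paper does \emph{not} compute the inner products $\langle y a_n, a_n y\rangle_\varphi$ etc.\ directly by inserting $a_n \approx u^{-k_n} a_n u^{k_n}$ and reducing to scalar ``$\varphi_1$-integrals of $u^{-k}(\text{letter})u^k$''. The obstacle is that $a_n = x_n - E_{M_1}(x_n)$ is a \emph{general} element of $M \ominus M_1$, not a finite sum of reduced words, so you cannot expand it and obtain such a reduction with any uniform control; your step (ii) is asserted, not justified. Instead the paper separates the problem into two pieces. First, it fixes the finite-dimensional subspace $V \subset M_1$ spanned by the first and last $M_1$-letters of all reduced words appearing in $y$, $y^*$, $\sigma_{-\mathrm i}^\varphi(y^*)$, $w_j^* w_i$ and $w_i\sigma_{-\mathrm i}^\varphi(w_j^*)$, writes $\rL^2(M)\ominus\rL^2(M_1) = \mathcal K_1 \oplus \mathcal K_2 \oplus \mathcal L$ according to whether the first or last $M_1$-letter lies in $V$ or in its orthocomplement $W$, and proves $\lim_{n\to\omega}\|P_{\mathcal K_i}(\Lambda_\varphi(z_n))\| = 0$. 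Second --- and this is the payoff --- once $z_n$ is asymptotically concentrated in $\mathcal L$, the subspaces $y\,\mathcal L$, $J_\varphi\sigma_{-\mathrm i/2}^\varphi(y^*)J_\varphi\,\mathcal L$ and $y\,\rL^2(M_1) + J_\varphi\sigma_{-\mathrm i/2}^\varphi(y^*)J_\varphi\,\rL^2(M_1)$ are \emph{exactly} pairwise orthogonal in $\rL^2(M)$, by pure reduced-word combinatorics: since every $\mathcal L$-word has first and last $M_1$-letters in $W = V^\perp$, left or right multiplication by $y$ creates honest reduced words with no cancellation against the letters of $y$. The three $\varphi^\omega$-orthogonalities then drop out of the ultraproduct Hilbert space structure.

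The $\varepsilon$-orthogonality proposition enters only in the first step, and not through a diagonal sequence $k_n \to \infty$. For each $K$, one selects $2^K$ integers $k$ spaced far enough apart that the subspaces $T^k\mathcal K_i$ (with $T = u\,J_\varphi u J_\varphi$) are pairwise $\varepsilon$-orthogonal, applies Proposition~\ref{projections} to get $\sum_k\|P_{T^k\mathcal K_i}(\Lambda_\varphi(z_n))\|^2 \leq C\|z_n\|_\varphi^2$, and uses approximate $T$-invariance of $z_n$ for each \emph{fixed} $k$ as $n\to\omega$ to deduce $2^K\|P_{\mathcal K_i}(\Lambda_\varphi(z_n))\|^2 \leq 4\|z_n\|_\varphi^2 + o(1)$; letting $K\to\infty$ gives the vanishing. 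Your reliance in step (iv) on $\|u^{k_n}x_n - x_n u^{k_n}\|_\varphi^\sharp \to 0$ for $k_n\to\infty$ is unnecessary and obscures that only finitely many fixed $k$'s are ever used at once.
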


\begin{proof}
For every $i \in \{1, 2\}$, denote by $\mathcal A_i \subset M_i$ (resp.\ $\mathcal A \subset M$) the unital $\sigma$-strongly dense $\ast$-subalgebra of all the elements in $M_i$ (resp.\ $M$) which are analytic with respect to the modular automorphism group $(\sigma_t^{\varphi_i})$ (resp.\ $(\sigma_t^\varphi)$) (see Proposition \ref{modular-analytic}). Observe that for every $i \in \{1, 2\}$, $\mathcal A_i \subset \mathcal A$. Denote by $(\mathcal A_{i_1} \ominus \C) \cdots (\mathcal A_{i_n} \ominus \C)$ the set of all the reduced words of the form $a_1 \cdots a_n$ with $a_j \in \mathcal A_{i_j} \ominus \C$, $n \geq 1$ and $i_1 \neq \cdots \neq i_n$. The linear span of
$$\left\{1, (\mathcal A_{i_1} \ominus \C) \cdots (\mathcal A_{i_n} \ominus \C) : n \geq 1, i_1 \neq \cdots \neq i_n \right\}$$
forms a unital $\sigma$-strongly dense $\ast$-subalgebra of $M$. 

Using the existence of the normal conditional expectation $E_{M_1} : M \to M_1$, every $y \in M \ominus M_1$ can be approximated with respect to the $\sigma$-strong topology by a net $(y_\alpha)_{\alpha \in I}$ of finite linear combinations of reduced words in $(\mathcal A_{i_1} \ominus \C)  \cdots (\mathcal A_{i_n} \ominus \C)$ where $n \geq 1$, $2 \in \{i_1, \dots, i_n\}$ and $i_1 \neq \cdots \neq i_n$. Assume that for every $\alpha \in I$ and every $x \in \{u\}' \cap M^\omega$, $y_\alpha(x - E_{M_1^\omega}(x))$, $(x - E_{M_1^\omega}(x))y_\alpha$ and $y_\alpha E_{M_1^\omega}(x) - E_{M_1^\omega}(x)y_\alpha$ are pairwise $\varphi^\omega$-orthogonal in $M^\omega$. Then since $y_\alpha \to y$ $\sigma$-strongly as $\alpha \to \infty$, it follows that 
\begin{align*}
y_\alpha(x - E_{M_1^\omega}(x)) & \to y(x - E_{M_1^\omega}(x)) \\
(x - E_{M_1^\omega}(x))y_\alpha & \to (x - E_{M_1^\omega}(x))y \\
y_\alpha E_{M_1^\omega}(x) - E_{M_1^\omega}(x)y_\alpha & \to yE_{M_1^\omega}(x) - E_{M_1^\omega}(x)y
\end{align*}
$\sigma$-strongly as $\alpha \to \infty$. Therefore, $y(x - E_{M_1^\omega}(x))$, $(x - E_{M_1^\omega}(x))y$ and $yE_{M_1^\omega}(x) - E_{M_1^\omega}(x)y$ are pairwise $\varphi^\omega$-orthogonal in $M^\omega$. Using the previous discussion, we infer that it suffices to prove the result for 
$$y = \sum_{j = 1}^k w_j \; \text{ where } \; w_j = a_{j, 1} b_{j, 1} \cdots b_{j, n_j} a_{j, n_j + 1}$$ 
with $n_j \geq 1$, $a_{j, 1} = 1$ or $a_{j, 1} \in \mathcal A_1 \ominus \C$, $a_{j, n_j + 1} = 1$ or $a_{j, n_j + 1} \in \mathcal A_1 \ominus \C$, $a_{j, 2}, \dots, a_{j, n_j} \in \mathcal A_1 \ominus \C$ and $b_{j, 1}, \dots, b_{j, n_j} \in \mathcal A_2 \ominus \C$. We fix such an element $y \in M \ominus M_1$ until the end of the proof. Observe that for every $1 \leq j \leq k$, we have $w_j \in \mathcal A \ominus \C$ and 
$$\sigma_{-{\rm i}}^{\varphi}(w_j^*) = \sigma_{-{\rm i}}^{\varphi_1}(a_{j, n_j + 1}^*) \sigma_{-{\rm i}}^{\varphi_2}(b_{j, n_j}^*) \cdots \sigma_{-{\rm i}}^{\varphi_2}(b_{j, 1}^*) \sigma_{-{\rm i}}^{\varphi_1}(a_{j, 1}^*).$$ 
It follows that $\sigma_{-{\rm i}}^{\varphi}(w_j^*)$ is a reduced word containing at least one letter from $M_2 \ominus \C$.

Denote by $V \subset M_1$ the finite dimensional vector subspace generated by $1$ and by
\begin{itemize}
\item the first letters coming from $M_1 \ominus \C$ of the reduced words $w_i, w_i^*, \sigma_{-{\rm i}}^\varphi(w_i^*)$ and the first letters coming from $M_1 \ominus \C$ of all the reduced words arising in the finite linear decomposition of $w_j^*w_i$ into reduced words, for all $1 \leq i, j \leq k$, and
\item the last letters coming from $M_1 \ominus \C$ of the reduced words $w_i$ and the last letters coming from $M_1 \ominus \C$ of all the reduced words arising in the finite linear decomposition of $w_i \sigma_{-{\rm i}}^\varphi(w_j^*)$ into reduced words, for all $1 \leq i, j \leq k$.
\end{itemize}
Let $\ell = \dim(V)$ and choose elements $e_1, \dots, e_{\ell} \in V$ so that $(\Lambda_{\varphi_1}(e_i))_{i = 1}^\ell$ forms an orthonormal basis for $\Lambda_{\varphi_1}(V)$. By Gram-Schmidt process, choose a vector subspace $W \subset M_1$ so that 
$$\rL^2(M_1) = \Lambda_{\varphi_1}(V) \oplus \overline{\Lambda_{\varphi_1}(W)}.$$

We will be using the following notation:
\begin{itemize}
\item $\mathcal K_1 \subset \rL^2(M)$ is the closed subspace generated by the image under $\Lambda_\varphi$ of the linear span of all the reduced words in $(M_2 \ominus \C) \cdots (M_2 \ominus \C)$, $(V \ominus \C)(M_2 \ominus \C) \cdots (M_2 \ominus \C)$, $(M_2 \ominus \C) \cdots (M_2 \ominus \C)(M_1 \ominus \C)$ and $(V \ominus \C)(M_2 \ominus \C) \cdots (M_2 \ominus \C)(M_1 \ominus \C)$. Observe that 
$$\mathcal K_1 \cong \Lambda_\varphi(V) \otimes \rL^2((M_2 \ominus \C) \cdots (M_2 \ominus \C) M_1).$$
\item $\mathcal K_2 \subset \rL^2(M)$ is the closed subspace generated by the image under $\Lambda_\varphi$ of the linear span of all the reduced words in $W(M_2 \ominus \C) \cdots (M_2 \ominus \C)$ and $W (M_2 \ominus \C) \cdots (M_2 \ominus \C)(V \ominus \C)$. Observe that 
$$\mathcal K_2 \cong \rL^2(W(M_2 \ominus \C) \cdots (M_2 \ominus \C)) \otimes \Lambda_\varphi(V).$$
\item $\mathcal L \subset \rL^2(M)$ is the closed subspace generated by the image under $\Lambda_\varphi$ of the linear span of all the reduced words in $W (M_2 \ominus \C) \cdots (M_2 \ominus \C) W$. Observe that 
$$\rL^2(M_1) \oplus \mathcal K_1 \oplus \mathcal K_2 \oplus \mathcal L = \rL^2(M).$$
\end{itemize}

Let $u \in \mathcal U(M_1^{\varphi_1})$ such that $u^k \to 0$ weakly as $|k| \to \infty$ and put $T = u \, J_\varphi uJ_\varphi \in \mathcal U(\rL^2(M))$. Observe that since $u \in \mathcal U(M_1^{\varphi_1}) \subset \mathcal U(M^\varphi)$, we have $T \Lambda_\varphi(z) = \Lambda_\varphi(u z u^*)$ for all $z \in M$.

\begin{claim}\label{claim1}
For all $\varepsilon > 0$, there exists $k_0 \in \N$ such that for all $i \in \{1, 2\}$ and all $|k| \geq k_0$, we have $T^k \mathcal K_i \perp_\varepsilon \mathcal K_i$.
\end{claim}

\begin{proof}[Proof of Claim \ref{claim1}]
Let $\xi, \eta \in \mathcal K_1$ that we write $ \sum_{i = 1}^{\ell} \Lambda_\varphi(e_i) \otimes \xi_i$ and $\eta = \sum_{j = 1}^{\ell} \Lambda_\varphi(e_j) \otimes  \eta_j$ with $\xi_i, \eta_j \in \rL^2((M_2 \ominus \C) \cdots (M_2 \ominus \C) M_1)$. Observe that $\|\xi\|_\varphi^2 = \sum_{i = 1}^\ell \|\xi_i\|_\varphi^2 $ and $\|\eta\|_\varphi^2 = \sum_{j = 1}^\ell \|\eta_j\|_\varphi^2$. Since $u \in M^\varphi$, we have $T^k \xi = \sum_{i = 1}^{\ell} \Lambda_\varphi( u^k e_i) \otimes  J_\varphi u^k J_\varphi \xi_i$ and hence
$$|\langle T^k \xi, \eta \rangle_\varphi| \leq \sum_{i, j = 1}^{\ell} |\varphi(e_j^* u^k e_i)| \, \| \xi_i \|_\varphi \, \| \eta_j\|_\varphi.$$
Since $u^k \to 0$ weakly as $|k| \to \infty$, we may choose $k_1 \in \N$ such that for all $|k| \geq k_1$ and all $1 \leq i, j \leq \ell$, we have $|\varphi(e_j^* u^k e_i)| \leq \varepsilon/\ell$. By Cauchy-Schwarz inequality, for all $|k| \geq k_1$, we obtain $|\langle T^k \xi, \eta \rangle_\varphi| \leq \varepsilon  \| \xi \|_\varphi  \| \eta \|_\varphi$. 

Likewise let $\xi, \eta \in \mathcal K_2$ that we write $ \sum_{i = 1}^{\ell} \xi_i \otimes \Lambda_\varphi(e_i)$ and $\eta = \sum_{j = 1}^{\ell} \eta_j \otimes \Lambda_\varphi(e_j)$ with $\xi_i, \eta_j \in \rL^2(W (M_2 \ominus \C) \cdots (M_2 \ominus \C))$. Observe that $\|\xi\|_\varphi^2 = \sum_{i = 1}^\ell \|\xi_i\|_\varphi^2 $ and $\|\eta\|_\varphi^2 = \sum_{j = 1}^\ell \|\eta_j\|_\varphi^2$. Since $u \in M^\varphi$, we have $T^k \xi = \sum_{i = 1}^{\ell} u^k \xi_i \otimes \Lambda_\varphi( e_i u^{-k})$ and hence
$$|\langle T^k \xi, \eta \rangle_\varphi| \leq \sum_{i, j = 1}^{\ell} |\varphi(e_j^* e_i u^{-k})| \, \| \xi_i \|_\varphi \, \| \eta_j\|_\varphi.$$
Since $u^k \to 0$ weakly as $|k| \to \infty$, we may choose $k_2 \in \N$ such that for all $|k| \geq k_2$ and all $1 \leq i, j \leq \ell$, we have $|\varphi(e_j^* e_i u^{-k})| \leq \varepsilon/\ell$. By Cauchy-Schwarz inequality, for all $|k| \geq k_2$, we obtain $|\langle T^k \xi, \eta \rangle_\varphi| \leq \varepsilon \| \xi \|_\varphi \| \eta \|_\varphi$. 

Put $k_0 = \max (k_1, k_2)$. Then for all $i \in \{1, 2\}$ and all $|k| \geq k_0$, we have that $T^k \mathcal K_i \perp_\varepsilon \mathcal K_i$.
\end{proof}

\begin{claim}\label{claim2}
For all $i \in \{1, 2\}$ and all $(z_n)^\omega \in \{u\}' \cap M^\omega$, we have $$\lim_{n \to \omega} \|P_{\mathcal K_i}(\Lambda_\varphi(z_n))\|_\varphi =~0.$$ 
\end{claim}

\begin{proof}[Proof of Claim \ref{claim2}]
Let $i \in \{1, 2\}$ and $z = (z_n)^\omega \in \{u\}' \cap M^\omega$. We may assume that $\|z_n\|_\infty \leq 1$ for all $n \in \N$. For all $n \in \N$ and all $k \in \N$, we have 
$$
\begin{aligned}
\| P_{\mathcal K_i}(\Lambda_\varphi(z_n)) \|_\varphi^2 & = \|T^k P_{\mathcal K_i} (\Lambda_\varphi(z_n)) \|_\varphi^2 \\
& = \|T^k P_{\mathcal K_i} (\Lambda_\varphi(z_n)) - P_{T^k \mathcal K_i} (\Lambda_\varphi(z_n)) + P_{T^k \mathcal K_i} (\Lambda_\varphi(z_n))\|_\varphi^2 \\
& \leq 2 \|T^k P_{\mathcal K_i} (\Lambda_\varphi(z_n)) - P_{T^k \mathcal K_i} (\Lambda_\varphi(z_n))\|_\varphi^2 + 2 \| P_{T^k \mathcal K_i} (\Lambda_\varphi(z_n))\|_\varphi^2 \\
& = 2 \| P_{T^k \mathcal K_i} (\Lambda_\varphi(u^k z_n u^{-k} - z_n))\|_\varphi^2 + 2 \| P_{T^k \mathcal K_i} (\Lambda_\varphi(z_n))\|_\varphi^2 \\
& \leq 2 \|u^k z_n u^{-k} - z_n\|_\varphi^2 + 2 \| P_{T^k \mathcal K_i} (\Lambda_\varphi(z_n))\|_\varphi^2.
\end{aligned}
$$
Fix $K \geq 1$. Choose $\varepsilon > 0$ very small according to \cite[Proposition 2.3]{Ho12a} so that $\prod_{j = 0}^{K - 1}(1 + \delta^{\circ j}(\varepsilon))^2 \leq 2$. Then choose a subset $\mathcal G \subset \N$ of $2^K$ integers such that two distinct integers in $\mathcal G$ are at least at distance $k_0$ from one another. By Claim \ref{claim1}, we obtain $T^{k_1} \mathcal K_i \perp_\varepsilon T^{k_2} \mathcal K_i$ for all $k_1, k_2 \in \mathcal G$ such that $k_1 \neq k_2$. Thus, we obtain
$$2^K \| P_{\mathcal K_i}(\Lambda_\varphi(z_n)) \|_\varphi^2 \leq 2 \sum_{k \in \mathcal G} \|u^k z_n u^{-k} - z_n\|_\varphi^2 + 4 \| z_n \|_\varphi^2.$$
Since $\mathcal G$ is finite, we have $\lim_{n \to \omega} \| P_{\mathcal K_i}(\Lambda_\varphi(z_n)) \|_\varphi^2 \leq 2^{2 - K}$ for all $K \geq 1$. Therefore, we obtain $\lim_{n \to \omega} \| P_{\mathcal K_i}(\Lambda_\varphi(z_n)) \|_\varphi = 0$.
\end{proof}

\begin{claim}\label{claim3}
The subspaces $y \, \mathcal L$, $J_\varphi \sigma_{-{\rm i}/2}^\varphi(y^*) J_\varphi \,  \mathcal L$ and $y \,  \rL^2(M_1) + J_\varphi \sigma_{-{\rm i}/2}^\varphi(y^*) J_\varphi \, \rL^2(M_1)$ are pairwise orthogonal in $\rL^2(M)$.
\end{claim}

\begin{proof}[Proof of Claim \ref{claim3}]
Recall that $y = \sum_{j = 1}^k w_j$ where $w_j = a_{j, 1} b_{j, 1} \cdots b_{j, n_j} a_{j, n_j + 1}$ 
with $n_j \geq 1$, $a_{j, 1} = 1$ or $a_{j, 1} \in \mathcal A_1 \ominus \C$, $a_{j, n_j + 1} = 1$ or $a_{j, n_j + 1} \in \mathcal A_1 \ominus \C$, $a_{j, 2}, \dots, a_{j, n_j} \in \mathcal A_1 \ominus \C$ and $b_{j, 1}, \dots, b_{j, n_j} \in \mathcal A_2 \ominus \C$. Observe that
\begin{align}
y \, \mathcal L & \subset \overline{\spn  \left\{ \Lambda_\varphi(w_j W (M_2 \ominus \C) \cdots (M_2 \ominus \C) W) : 1 \leq j \leq k \right\}} \label{line1} \\
J_\varphi \sigma_{-{\rm i}/2}^\varphi(y^*) J_\varphi \, \mathcal L & \subset  \overline{\spn \left\{ \Lambda_\varphi(W (M_2 \ominus \C) \cdots (M_2 \ominus \C) Ww_j) : 1 \leq j \leq k \right\}} \label{line2} 
\end{align}
and
\begin{equation} \label{line3}
y \,  \rL^2(M_1) + J_\varphi \sigma_{-{\rm i}/2}^\varphi(y^*) J_\varphi \, \rL^2(M_1)  \subset \overline{\spn \left\{ \Lambda_\varphi(w_i M_1), \Lambda_\varphi(M_1 w_j) : 1 \leq i, j \leq k \right \}}.
\end{equation}

Let $1 \leq i \leq k$. Observe that by the choice of the vector subspace $W \subset M_1$, any letter $v \in W$ is $\varphi$-orthogonal in $M$ to the first letter of the reduced word $w_i^*$ and to the first letter of the reduced word $\sigma_{-{\rm i}}^\varphi(w_i^*)$. Hence $w_i v$ is a reduced word starting with the first letter of $w_i$ and ending with a letter from $M_1 \ominus \C$ and $vw_i$ is a reduced word starting with a letter from $M_1 \ominus \C$ and ending with the last letter of $w_i$. Moreover both $vw_i$ and $w_iv$ contain at least one letter from $M_2 \ominus \C$.

Let $1 \leq i, j \leq k$. By the choice of the vector subspace $W \subset M_1$ and the remark above, the first letter of any reduced word $w_iv$ with $v \in W$ is $\varphi$-orthogonal to $W$ in $M$. This implies that $W (M_2 \ominus \C) \cdots (M_2 \ominus \C) Ww_j$ and $w_i W (M_2 \ominus \C) \cdots (M_2 \ominus \C) W$ are $\varphi$-orthogonal in $M$. Since this holds for all $1 \leq i, j \leq k$, using (\ref{line1}) and (\ref{line2}), we obtain that the subspaces $y \, \mathcal L$ and $J_\varphi \sigma_{-{\rm i}/2}^\varphi(y^*) J_\varphi \,  \mathcal L$ are orthogonal in $\rL^2(M)$.

Let $1 \leq i, j \leq k$. If $n_i \leq n_j$, then any element in $w_i M_1$ is a finite linear combination of reduced words which have at most $n_i$ letters from $M_2 \ominus \C$ while a reduced word in $w_j W(M_2 \ominus \C) \cdots (M_2 \ominus \C) W$ has at least $n_j + 1$ letters from $M_2 \ominus \C$. This implies that  $w_j W (M_2 \ominus \C) \cdots (M_2 \ominus \C) W$ and $w_i M_1$ are $\varphi$-orthogonal in $M$.  If $n_i > n_j$, then $w_j^* w_i$ is a finite linear combination of reduced words whose first letter is $\varphi$-orthogonal to $W$ in $M$ and which contain at least one letter from $M_2 \ominus \C$. It follows that any element in $w_j^* w_i M_1$ is a finite linear combination of reduced words whose first letter is $\varphi$-orthogonal to $W$ in $M$. Again, this implies that $w_j W (M_2 \ominus \C) \cdots (M_2 \ominus \C) W$ and $w_i M_1$ are $\varphi$-orthogonal in $M$. Next, since $w_i$ contains at least one letter from $M_2 \ominus \C$ and by the choice of the vector subspace $W \subset M_1$, any element in $M_1 w_i$ is a finite linear combination of reduced words whose last letter is $\varphi$-orthogonal to $W$ in $M$. This implies that $w_j W (M_2 \ominus \C) \cdots (M_2 \ominus \C) W$ and $M_1w_i$ are $\varphi$-orthogonal in $M$. Since the previous reasoning holds for all $1 \leq i, j \leq k$, using (\ref{line1}) and (\ref{line3}), we obtain that the subspaces $y \, \mathcal L$ and $y \,  \rL^2(M_1) + J_\varphi \sigma_{-{\rm i}/2}^\varphi(y^*) J_\varphi \, \rL^2(M_1)$ are orthogonal in $\rL^2(M)$.

Let $1 \leq i, j \leq k$. Since $w_i$ contains at least one letter from $M_2 \ominus \C$ and by the choice of the vector subspace $W \subset M_1$, any element in $w_i M_1$ is a finite linear combination of reduced words whose first letter is $\varphi$-orthogonal to $W$ in $M$. This implies that $W (M_2 \ominus \C) \cdots (M_2 \ominus \C) Ww_j$ and $w_i M_1$ are $\varphi$-orthogonal in $M$. Next, if $n_i \leq n_j$, then any element in $M_1 w_i$ is a finite linear combination of reduced words which have at most $n_i$ letters from $M_2 \ominus \C$ while a reduced word in $W(M_2 \ominus \C) \cdots (M_2 \ominus \C) Ww_j$ has at least $n_j + 1$ letters from $M_2 \ominus \C$. This implies that $W (M_2 \ominus \C) \cdots (M_2 \ominus \C) W w_j$ and $M_1 w_i$ are $\varphi$-orthogonal in $M$. If $n_i > n_j$, then $w_i \sigma_{- {\rm i}}^\varphi(w_j^*)$ is a finite linear combination of reduced words whose last letter is $\varphi$-orthogonal to $W$ in $M$ and which contain at least one letter from $M_2 \ominus \C$. It follows that any element in $M_1 w_i \sigma_{- {\rm i}}^\varphi(w_j^*)$ is a finite linear combination of reduced words whose last letter is $\varphi$-orthogonal to $W$ in $M$. Using Proposition \ref{modular-analytic}, this implies again that $W (M_2 \ominus \C) \cdots (M_2 \ominus \C) W w_j$ and $M_1w_i$ are $\varphi$-orthogonal in $M$. Since the previous reasoning holds for all $1 \leq i, j \leq k$, using (\ref{line2}) and (\ref{line3}), we obtain that the subspaces $J_\varphi \sigma_{-{\rm i}/2}^\varphi(y^*) J_\varphi \, \mathcal L$ and $y \,  \rL^2(M_1) + J_\varphi \sigma_{-{\rm i}/2}^\varphi(y^*) J_\varphi \, \rL^2(M_1)$ are orthogonal in $\rL^2(M)$. This finishes the proof of Claim~\ref{claim3}.
\end{proof}

We are now ready to finish the proof of Theorem \ref{key-thm}. Let $x \in \{u\}' \cap M^\omega$ and put $z = x - E_{M_1^\omega}(x)$. Observe that since $u \in M_1 \subset M_1^\omega$, we have $z \in \{u\}' \cap (M^\omega \ominus M_1^\omega)$. Write $z = (z_n)^\omega \in \{u\}' \cap (M^\omega \ominus M_1^\omega)$ with $z_n = x_n - E_{M_1}(x_n)$. By Claim \ref{claim2} and since $y$ is analytic with respect to the modular automorphism group $(\sigma_t^\varphi)$, we obtain
\begin{align*}
\Lambda_{\varphi^\omega}(y z) &= (\Lambda_{\varphi}(y z_n))_\omega = (y \, \Lambda_\varphi(z_n))_\omega \\
&= (y \, P_{\mathcal L} (\Lambda_\varphi(z_n)))_\omega \in \rL^2(M)^\omega \\
\Lambda_{\varphi^\omega}(z y) & = (\Lambda_\varphi(z_n y))_\omega = (J_\varphi \sigma_{-{\rm i}/2}^\varphi(y)^*J_\varphi \, \Lambda_\varphi(z_n))_\omega \\
&= (J_\varphi \sigma_{-{\rm i}/2}^\varphi(y)^*J_\varphi \, P_{\mathcal L}(\Lambda_\varphi(z_n)))_\omega \in \rL^2(M)^\omega \\ 
\Lambda_{\varphi^\omega}(y E_{M_1^\omega}(x) - E_{M_1^\omega}(x)y) &= (\Lambda_\varphi(yE_{M_1}(x_n) - E_{M_1}(x_n)y))_\omega \\
&= ((y - J_\varphi \sigma_{-{\rm i}/2}^\varphi(y)^*J_\varphi) \, \Lambda_{\varphi}(E_{M_1}(x_n) ))_\omega \in \rL^2(M)^\omega.
\end{align*}

Using Claim \ref{claim3} for every $n \in \N$ and using the ultraproduct Hilbert space structure of $\rL^2(M)^\omega$, we obtain that $\Lambda_{\varphi^\omega}(y(x - E_{M_1^\omega}(x)))$, $\Lambda_{\varphi^\omega}((x - E_{M_1^\omega}(x))y)$ and $\Lambda_{\varphi^\omega}(y E_{M_1^\omega}(x) - E_{M_1^\omega}(x)y)$ are pairwise orthogonal in $\rL^2(M)^\omega$. This implies that $y(x - E_{M_1^\omega}(x))$, $(x - E_{M_1^\omega}(x))y$ and $y E_{M_1^\omega}(x) - E_{M_1^\omega}(x)y$ are pairwise $\varphi^\omega$-orthogonal in $M^\omega$.
\end{proof}

\section{Proofs of the main results}\label{proofs}

\subsection{Proof of Theorem \ref{thmA} and Corollaries \ref{corB} and \ref{corC}} 

\begin{proof}[Proof of Theorem \ref{thmA}]
Let $M_1 \subset Q \subset M$ be any intermediate von Neumann subalgebra such that $Q' \cap M^\omega$ is diffuse. Since $M_1^{\varphi_1}$ is diffuse, by \cite[Corollary 3.2]{Ue11}, we have $Q' \cap M \subset M_1' \cap M \subset M_1$ and so $Q' \cap M = \mathcal Z(Q) =  Q' \cap M_1 \subset \mathcal Z(M_1)$.

First, denote by $z \in Q' \cap M$ the maximum projection such that $M_1z = Qz$. We show that $z = 1$. Assume by contradiction that $z \neq 1$ and put $q = z^\perp = 1 - z \in Q' \cap M = \mathcal Z(Q)$. We have $q \neq 0$ and $Qq \ominus M_1q \neq 0$. Denote by $\mathcal J$ the nonzero $\sigma$-strongly closed two-sided ideal in $Qq$ generated by $Qq \ominus M_1q$. Let $e \in \mathcal Z(Qq) = \mathcal Z(Q) q$ be the unique nonzero central projection in $Qq$ such that $\mathcal J = Qe$. We necessarily have $e = q$. Indeed otherwise we have $q - e \neq 0$ and by the choice of the projection $z \in Q' \cap M$, we have $Q(q - e) \ominus M_1(q - e) \neq 0$. Now let $y \in Q(q - e) \ominus M_1(q - e)$ such that $y \neq 0$. Since $y \in Q q \ominus M_1 q$, we obtain $y \in \mathcal J$ and so $y = ye$. However since $y \in Q(q - e) \ominus M_1(q - e)$, we also obtain $y = y(q - e)$ and thus $y = 0$. This is a contradiction. Thus, we have $e = q$.

Next, we show that $(Qq)' \cap (qMq)^\omega \subset (M_1q)^\omega$. Indeed, let $x \in (Qq)' \cap (qMq)^\omega \subset M_1' \cap M^\omega$. For every $y \in Q q \ominus M_1 q \subset M \ominus M_1$, we have
$$0 = yx - xy = y(x - E_{M_1^\omega}(x)) - (x - E_{M_1^\omega}(x)) y + (y E_{M_1^\omega}(x) - E_{M_1^\omega}(x) y).$$
By Theorem \ref{key-thm}, $y(x - E_{M_1^\omega}(x))$, $(x - E_{M_1^\omega}(x)) y$ and $(y E_{M_1^\omega}(x) - E_{M_1^\omega}(x) y)$ are pairwise $\varphi^\omega$-orthogonal in $M^\omega$. By Pythagora's theorem, we obtain $y(x - E_{M_1^\omega}(x)) = 0$. Likewise, for every $a \in Qq$ and every $y \in Qq \ominus M_1q$, we have $a \, y \,  (x - E_{M_1^\omega}(x)) = 0$ and since $y E_{M_1}(a) \in Qq \ominus M_1q$ and $a - E_{M_1}(a) \in Qq \ominus M_1q$, we also have
$$
y \, a \,  (x - E_{M_1^\omega}(x)) = y \, E_{M_1}(a) \, (x - E_{M_1^\omega}(x)) + y \, (a - E_{M_1}(a)) \, (x - E_{M_1^\omega}(x)) = 0.
$$
This implies that for every $y \in \mathcal J$, we have $y (x - E_{M_1^\omega}(x)) = 0$ hence $q (x - E_{M_1^\omega}(x)) = 0$. Therefore, $x = E_{(M_1q)^\omega}(x) \in (M_1q)^\omega$.

Now we have that $(Qq)' \cap (qMq)^\omega = (Qq)' \cap (M_1q)^\omega$. Since $Q' \cap M^\omega$ is diffuse and since $(Qq)' \cap (qMq)^\omega = q(Q' \cap M^\omega)q$, we have that $(Qq)' \cap (M_1q)^\omega$ is diffuse as well. This implies that there exists a net of unitaries $U_\alpha \in \mathcal U((Qq)' \cap(M_1q)^\omega)$ such that $U_\alpha \to 0$ weakly as $\alpha \to \infty$. We may represent every $U_\alpha \in \mathcal U((Qq)' \cap (M_1 q)^\omega)$ by a sequence of elements $(u_n^\alpha)_n \in \mathcal M^\omega(M_1q)$ such that $u_n^\alpha \in \Ball(M_1q)$ for every $\alpha$ and every $n \in \N$, $U_\alpha = (u^\alpha_n)^\omega$ for every $\alpha$ and $y u_n^\alpha - u_n^\alpha y \to 0$ $\ast$-strongly as $n \to \omega$ for every $\alpha$ and every $y \in Qq$. 

Define the directed set 
$$\mathcal I = \{i = (\varepsilon, \mathcal F, \mathcal G ) : \varepsilon > 0, \, \mathcal F \subset M_1q \text{ and } \mathcal G \subset Qq \text{ are finite subsets}\}$$
with order relation given by 
$$(\varepsilon_1, \mathcal F_1, \mathcal G_1) \leq (\varepsilon_2, \mathcal F_2, \mathcal G_2) \; \text{ if and only if } \; \varepsilon_2 \leq \varepsilon_1, \, \mathcal F_1 \subset \mathcal F_2 \text{ and } \mathcal G_1 \subset \mathcal G_2.$$
Let $i = (\varepsilon, \mathcal F, \mathcal G) \in \mathcal I$. Since $U_\alpha \to 0$ weakly as $\alpha \to \infty$, there exists $\alpha$ such that $|\varphi^\omega(b^* U_\alpha a)| \leq \varepsilon/2$ for all $a, b \in \mathcal F$. Since $U_\alpha = (u^\alpha_n)^\omega \in \mathcal U((Qq)' \cap (M_1q)^\omega)$, for all $a, b \in \mathcal F$ and all $c \in \mathcal G$, we have 
\begin{align*}
\frac{\varepsilon}{2} \geq |\varphi^\omega(b^* U_\alpha a)| &= \lim_{n \to \omega} |\varphi(b^* u_n^\alpha a)| \\
\|a\|_\varphi = \|U_\alpha a\|_{\varphi^\omega} &= \lim_{n \to \omega} \|u_n^\alpha a\|_\varphi \\
0 = \|c U_\alpha - U_\alpha c\|_{\varphi^\omega} &= \lim_{n \to \omega} \|c u^\alpha_n - u^\alpha_n c\|_\varphi.
\end{align*}
Since $\mathcal F \subset M_1q$ and $\mathcal G \subset Qq$ are finite subsets, there exists $n = n(\alpha)$ such that
$$\max \left\{|\|a\|_\varphi - \|u^\alpha_{n(\alpha)} a\|_\varphi|, \|cu^{\alpha}_{n(\alpha)} - u^\alpha_{n(\alpha)} c\|_\varphi, |\varphi(b^* u^{\alpha}_{n(\alpha)} a)| : a, b \in \mathcal F, c \in \mathcal G \right\} \leq \varepsilon.$$
Put $w_i = u^{\alpha}_{n(\alpha)} \in \Ball (M_1 q)$. Thus, $(w_i)_{i \in \mathcal I}$ is a net of elements in $\Ball(M_1q)$ such that 
\begin{itemize}
\item [(P1)] $\lim_{i \in \mathcal I} \|w_i a\|_\varphi = \|a\|_\varphi$ for all $a \in M_1q$.
\item [(P2)] $\lim_{i \in \mathcal I}  \|c w_i - w_i c\|_\varphi = 0$ for all $c \in Qq$.
\item [(P3)] $\lim_{i \in \mathcal I} |\varphi(b^* w_i a)| = 0$ for all $a, b \in M_1q$.
\end{itemize}

Put $\mathcal E = \spn(\{q (M_{i_1} \ominus \C) \cdots (M_{i_n} \ominus \C) q : n \geq 1, 2 \in \{i_1, \dots, i_n\} \text{ and } i_1 \neq \cdots \neq i_n\})$. Observe that $\mathcal E$ is $\sigma$-strongly dense in $qMq \ominus M_1 q$.
\begin{newclaim}
The following hold true.
\begin{enumerate}
\item For all $a, b \in \mathcal E$, we have
$$\lim_{i \in \mathcal I} \|E_{M_1q}(b^* w_i a)\|_\varphi = 0.$$ 
\item For all $b \in \mathcal E$ and all $y \in qMq \ominus M_1q$, we have
$$\lim_{i \in \mathcal I} \|E_{M_1q}(b^* w_i y)\|_\varphi = 0.$$ 
\end{enumerate}
\end{newclaim}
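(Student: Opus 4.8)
The plan is to prove both parts of the Claim by a reduced-word bookkeeping argument, exploiting that $w_i \in \Ball(M_1q) \subset \Ball(M_1)$ is a single letter (or a scalar plus a letter) from $M_1$, so conjugating/multiplying by $w_i$ can only change the $M_1 \ominus \C$ letters sitting at the two ends of a reduced word, never the internal structure or the number of $M_2 \ominus \C$ letters. First I would reduce to the generating elements: by linearity and by $\sigma$-strong density of $\mathcal E$ in $qMq \ominus M_1q$ together with the uniform bound $\|w_i\|_\infty \leq 1$ and boundedness of $E_{M_1q}$, it suffices to prove (1) when $a, b$ are individual reduced words $q(M_{i_1}\ominus\C)\cdots(M_{i_n}\ominus\C)q$ with a $2$ appearing among the indices, and similarly (2) when $b$ is such a word and $y$ ranges over $qMq \ominus M_1q$ (which we again approximate by elements of $\mathcal E$).

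For part (1): write $a = a_0 \, a' \, a_{n+1}$ where $a_0, a_{n+1} \in M_1$ (possibly $1$) are the end letters and $a'$ begins and ends with an $M_2 \ominus \C$ letter (or adjust the bookkeeping so the outermost $M_2\ominus\C$ letters are exposed), and similarly for $b$. Then $b^* w_i a$, after expanding $b^* = a_{m+1}^* \, \overline{b'}\, b_0^*$, reduces as follows: the middle piece $b_0^* w_i a_0 \in M_1$ splits into its scalar part $\varphi$-coefficient and its $M_1 \ominus \C$ part. When $b^* w_i a$ is written as a finite linear combination of reduced words, the only reduced words that survive $E_{M_1q}$ are those that collapse completely to $M_1$; but since $a$ and $b$ each carry at least one $M_2 \ominus \C$ letter and multiplication by the single $M_1$-element $w_i$ cannot annihilate those internal $M_2$ letters, $E_{M_1q}(b^* w_i a)$ is nonzero only through the scalar $\varphi(b_0^* w_i a_0)$-type coefficients times $E_{M_1q}$ of the remaining word $a_{m+1}^* \overline{b'} (\cdot) a' a_{n+1}$ --- and by the analyticity of the letters and Proposition \ref{modular-analytic}(3) these coefficients are of the form $\varphi(b_0^* w_i a_0)$ or $\varphi(w_i \, c)$ for fixed $c \in M_1$ coming from the exposed $M_2$-to-$M_1$ reductions. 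These all tend to $0$ by property (P3), after absorbing the analytic twists $\sigma_{\pm{\rm i}}^\varphi$ into the fixed elements. I would organize this as: expand into reduced words, identify which terms feed into $E_{M_1q}$, bound each such term by a constant (depending on $a,b$ but not $i$) times a quantity of the form $|\varphi(b^{\prime\prime *} w_i a^{\prime\prime})|$ with $a\dpr, b\dpr \in M_1$ fixed, then invoke (P3).

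For part (2), the point is that a generic $y \in qMq \ominus M_1q$ need not be analytic, so I cannot directly write $w_i y$ as a reduced word; instead I approximate $y$ in $\|\cdot\|_\varphi$ by $y_\varepsilon \in \mathcal E$, use $\|E_{M_1q}(b^* w_i y) - E_{M_1q}(b^* w_i y_\varepsilon)\|_\varphi \leq \|b^* w_i (y - y_\varepsilon)\|_\varphi \leq \|b\|_\infty \|w_i\|_\infty \|y - y_\varepsilon\|_\varphi$ (using that $E_{M_1q}$ is $\|\cdot\|_\varphi$-contractive and $w_i$ acts by left multiplication), then apply part (1) to $b^* w_i y_\varepsilon$, and finally a standard $\varepsilon/2$-argument. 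The main obstacle I anticipate is the first part --- specifically, the careful bookkeeping of which reduced words in the expansion of $b^* w_i a$ have nonzero image under $E_{M_1q}$ and verifying that the surviving contributions are \emph{exactly} controlled by expressions $\varphi(\,\cdot\, w_i \,\cdot\,)$ with the slots filled by fixed elements of $M_1$ (so that (P3) applies), keeping track of the $\sigma_{\pm{\rm i}}^\varphi$ twists that appear when one pulls a $w_i$ past the analytic end-letters; once that combinatorial lemma is in place the rest is routine approximation.
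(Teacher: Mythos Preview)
Your overall strategy is correct and matches the paper's: reduce part~(1) by linearity to single reduced words, extract a scalar coefficient $\varphi(\cdot\, w_i\,\cdot)$ with fixed $M_1$-entries, invoke (P3); then deduce part~(2) by $\|\cdot\|_\varphi$-approximation of $y$ by elements of $\mathcal E$ exactly as you describe.

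However, the ``main obstacle'' you anticipate in part~(1) is not an obstacle at all, and the modular twists $\sigma_{\pm{\rm i}}^\varphi$ play no role whatsoever. The paper's argument is a one-line freeness computation: writing $a = a_1 a_2 \cdots a_{2m+1}$ and $b = b_1 b_2 \cdots b_{2n+1}$ with $a_1, b_1 \in M_1q$ (possibly $=q$) and $a_2, b_2 \in M_2 \ominus \C$, one has
\[
b^* w_i a = b_{2n+1}^* \cdots b_2^* \, (b_1^* w_i a_1) \, a_2 \cdots a_{2m+1},
\]
and since $b_1^* w_i a_1 \in M_1$ sits between two $M_2 \ominus \C$ letters, freeness gives directly
\[
E_{M_1}(b^* w_i a) \;=\; \varphi(b_1^* w_i a_1) \cdot E_{M_1}\bigl(b_{2n+1}^* \cdots b_2^* \, a_2 \cdots a_{2m+1}\bigr).
\]
The second factor is a \emph{fixed} element of $M_1$ independent of $i$, and the scalar prefactor tends to $0$ by (P3). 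There is no need to assume the letters are analytic (they are taken from $M_{i_j} \ominus \C$, not from $\mathcal A_{i_j} \ominus \C$), no need to ``pull $w_i$ past'' anything, and no Proposition~\ref{modular-analytic}(3) is invoked. Your more elaborate bookkeeping would of course also reach the conclusion, but it obscures the point: the entire $i$-dependence of $E_{M_1}(b^* w_i a)$ sits in a single scalar.
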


\begin{proof}[Proof of the Claim]
$(1)$ By linearity, it suffices to prove the result for all the elements $a, b \in \mathcal E$ of the form $a = a_1 \cdots a_{2m + 1}$ and $b = b_1 \cdots b_{2n + 1}$ with $m, n \geq 1$, $a_1= q$ or $a_1 \in M_1q \ominus \C q$, $a_{2m + 1} = q$ or $a_{2m + 1} \in M_1q \ominus \C q$, $b_1 = q$ or $b_1 \in M_1q \ominus \C q$, $b_{2n + 1} = q$ or $b_{2 n+ 1} \in M_1q \ominus \C q$, $a_2, \dots, a_{2m}, b_2, \dots, b_{2n} \in M_2 \ominus \C$ and $a_3, \dots a_{2m - 1}, b_3, \dots, b_{2n - 1} \in M_1 \ominus \C$. We have
$$b^* w_i a = b_{2n + 1}^* \cdots b_2^* \, (b_1^*w_i a_1) \, a_2 \cdots a_{2m + 1}.$$
By the freeness property, we have
$$E_{M_1}(b^* w_i a) = \varphi(b_1^* w_i a_1) \, E_{M_1}(b_{2n + 1}^* \cdots b_2^* \, a_2 \cdots a_{2m + 1}).$$
Using property (P3) of the net $(w_i)_{i \in \mathcal I}$, we obtain $\lim_{i \in \mathcal I} \|E_{M_1q}(b^* w_i a)\|_\varphi = 0$.

$(2)$ Let $y \in qMq \ominus M_1q$ and $b \in \mathcal E$. We may assume that $\|b\|_\infty \leq 1$. Since $\mathcal E$ is $\sigma$-strongly dense in $qMq \ominus M_1q$, for every $\varepsilon > 0$, there exists $a \in \mathcal E$ such that $\|y - a\|_\varphi \leq \varepsilon/2$. Thus, for every $i \in \mathcal I$, we have
$$\|E_{M_1}(b^*w_i (y - a))\|_\varphi \leq \|b^* w_i (y - a)\|_\varphi \leq \|y - a\|_\varphi \leq \varepsilon.$$
Using the first part of the proof, this implies that $\limsup_{i \in \mathcal I} \|E_{M_1q}(b^* w_i y)\|_\varphi \leq \varepsilon$. Since $\varepsilon > 0$ is arbitrary, we obtain $\lim_{i \in \mathcal I} \|E_{M_1q}(b^* w_i y)\|_\varphi = 0$. This finishes the proof of the Claim.
\end{proof}

Let $b \in \mathcal E$ and $y \in Qq \ominus M_1q$. Using the properties (P1) and (P2) of the net $(w_i)_{i \in \mathcal I}$, we obtain
\begin{align*}
\|E_{M_1q}(b^* y)\|_\varphi &= \lim_{i \in \mathcal I} \|w_i \, E_{M_1q}(b^* y)\|_\varphi \; \text{ using (P1) for } a = E_{M_1q}(b^*y) \\
&= \lim_{i \in \mathcal I} \|E_{M_1q}(b^* y) \, w_i\|_\varphi  \; \text{ using (P2) for } c = E_{M_1q}(b^*y) \\
&= \lim_{i \in \mathcal I} \|E_{M_1q}(b^* y \, w_i) \|_\varphi \; \text{ since } w_i \in M_1q \\
&= \lim_{i \in \mathcal I} \|E_{M_1q}(b^* w_i \, y) \|_\varphi \; \text{ using (P2) for } c = y \\
& = 0 \quad \quad \quad \quad \quad \quad \quad \quad \; \, \text{ using item (2) of the Claim}. 
\end{align*}
Since $\mathcal E$ is $\sigma$-strongly dense in $qMq \ominus M_1q$, we may choose a net $(b_j)_{j \in J}$ in $\mathcal E$ such that $b_j^* \to y^*$ $\sigma$-strongly as $j \to \infty$. Since $E_{M_1q} : qMq \to M_1q$ is $\sigma$-strongly continuous, we obtain that $E_{M_1q}(b_j^*y) \to E_{M_1q}(y^*y)$ $\sigma$-strongly as $j \to \infty$ and hence $E_{M_1q}(y^* y) = 0$. This implies that $y^*y = 0$ and hence $y = 0$. Since $y \in Qq \ominus M_1q$ is arbitrary, we derive that $M_1q = Qq$. This contradicts the maximality of the projection $z \in Q' \cap M$ and finishes the proof of Theorem~\ref{thmA}.
\end{proof}

\begin{proof}[Proof of Corollary \ref{corB}]
Let $M_1 \subset Q \subset M$ be any intermediate von Neumann subalgebra with faithful normal conditional expectation $E_Q : M \to Q$. Denote by $E_{M_1} : M \to M_1$ the unique $\varphi$-preserving normal conditional expectation. Since $M_1^{\varphi_1}$ is diffuse, we have $M_1' \cap M \subset M_1$ by \cite[Corollary 3.2]{Ue11} and hence $E_{M_1}$ is the unique faithful normal conditional expectation from $M$ to $M_1$ by \cite[Th\'eor\`eme 1.5.5]{Co72}. Since $E_{M_1} \circ E_Q$ is a faithful normal conditional expectation from $M$ to $M_1$, we have $E_{M_1} \circ E_Q = E_{M_1}$. This implies that for every $x \in M$, we have
$$\varphi(E_Q(x)) = \varphi(E_{M_1}(E_Q(x))) = \varphi((E_{M_1} \circ E_Q)(x)) = \varphi(E_{M_1}(x)) = \varphi(x).$$
By \cite[Theorem IX.4.2]{Ta03}, we obtain that $Q$ is globally invariant under the modular automorphism group $(\sigma_t^\varphi)$.

Since $Q' \cap M = \mathcal Z(Q)$ is abelian, there exists a sequence of pairwise orthogonal projections $q_n \in Q' \cap M \subset \mathcal Z(M_1)$ such that $\sum_{n} q_n = 1$, $(Qq_0)' \cap q_0 M q_0 = (Q' \cap M)q_0$ is a diffuse abelian von Neumann algebra and $Qq_n$ is a diffuse factor such that $(Qq_n)' \cap q_n M q_n = (Q' \cap M)q_n = \C q_n$ for every $n \geq 1$. Define
$$\mathcal I = \{0\} \cup \left\{n \geq 1 : (Qq_n)' \cap (q_n M q_n)^\omega \text{ is diffuse} \right\}.$$

Put $z_0 = \sum_{n \in \mathcal I} q_n$ and $N = (\C z_0 \oplus M_1 z_0^\perp) \vee M_2$. If $z_0 = 0$, then $M_1 z_0 = Q z_0$. Otherwise, by \cite[Lemma 2.2]{Ue11}, we have that $M_1z_0$ and $z_0Nz_0$ generate $z_0 M z_0$ and are free in $z_0 M z_0$ with respect to the state $\varphi_{z_0} = \frac{\varphi(z_0 \cdot z_0)}{\varphi(z_0)}$. Thus, we have
$$(z_0 M z_0, \varphi_{z_0}) = (M_1 z_0, \varphi_{z_0}) \ast (z_0 N z_0, \varphi_{z_0}).$$
Moreover, the intermediate subalgebra $M_1z_0 \subset Qz_0 \subset z_0 M z_0$ is globally invariant under the modular automorphism group $(\sigma_t^{\varphi_{z_0}})$ and we have
\begin{equation}\label{inclusion-invariant}
\bigoplus_{n \in \mathcal I} \; (Q q_n)' \cap (q_n M q_n)^\omega \subset (Q z_0)' \cap (z_0 M z_0)^\omega.
\end{equation}
Since $Q \subset M$ is globally invariant under the modular automorphism group $(\sigma^{\varphi}_t)$ and since $q_n \in M^{\varphi}$ for all $n \in \N$, we have that both $\bigoplus_{n \in \mathcal I} \; (Q q_n)' \cap (q_n M q_n)^\omega$ and $(Q z_0)' \cap (z_0 M z_0)^\omega$ are globally invariant under the modular automorphism group $(\sigma^{\varphi_{z_0}^\omega}_t)$. Therefore, the inclusion (\ref{inclusion-invariant}) is with expectation. Since $\bigoplus_{n \in \mathcal I} \; (Q q_n)' \cap (q_n M q_n)^\omega$ is diffuse, so is $(Q z_0)' \cap (z_0 M z_0)^\omega$ by Proposition \ref{diffuse-centralizer}. Applying Theorem \ref{thmA} to the intermediate von Neumann subalgebra $M_1z_0 \subset Q z_0 \subset z_0Mz_0$, we obtain $M_1z_0 = Qz_0$.

For every $n \notin \mathcal I$, $(Qq_n)' \cap (q_n M q_n)^\omega$ is not diffuse. By Proposition \ref{gamma}, we obtain that $(Qq_n)' \cap (q_n M q_n)^\omega = \C q_n$. In particular, since $Q \subset M$ is with expectation, we have $(Qq_n)' \cap  (Qq_n)^\omega \subset (Qq_n)' \cap  (q_n M q_n)^\omega$. Thus, we have $(Qq_n)' \cap  (Qq_n)^\omega = \C q_n$ and so $Qq_n$ is a full nonamenable factor by Proposition \ref{gamma-amenable}. This finishes the proof of Corollary \ref{corB}.
\end{proof}

\begin{proof}[Proof of Corollary \ref{corC}]
Let $M_1$ be any diffuse amenable von Neumann algebra with separable predual. Choose a faithful normal state $\varphi_1$ on $M_1$ such that the centralizer $M_1^{\varphi_1}$ is diffuse (see Proposition \ref{diffuse-centralizer}). Define $M_2 = R_\infty$ to be the unique hyperfinite type ${\rm III_1}$ factor endowed with any faithful normal state $\varphi_2$. Then by \cite[Theorem 3.4]{Ue11}, the free product $(M, \varphi) = (M_1, \varphi_1) \ast (M_2, \varphi_2)$ is a full nonamenable type ${\rm III_1}$ factor. Moreover $M_1 \subset M$ is with expectation.

Let $M_1 \subset Q \subset M$ be any intermediate amenable von Neumann algebra with expectation. By Corollary \ref{corB}, we obtain that $M_1 = Q$.
\end{proof}

\subsection{Proof of Theorem \ref{thmD}}
We recall Popa's intertwining-by-bimodules theory that will play a crucial role in the proof of Theorem \ref{thmD}. Let  $M$ be a tracial von Neumann algebra together with $A \subset 1_A M 1_A$ and $B \subset 1_B M 1_B$ von Neumann subalgebras. Following \cite{Po01, Po03}, we say that $A$ {\em embeds into} $B$ {\em inside} $M$ and denote by $A \preceq_M B$ if one of the following equivalent conditions is satisfied:
\begin{itemize}
\item There exist projections $p \in A$ and $q \in B$, a nonzero partial isometry $v \in pMq$ and a unital normal $\ast$-homomorphism $\varphi : pAp \to qBq$ such that $a v = v \varphi(a)$ for all $a \in p A p$.

\item There exist $\ell \geq 1$, a projection $q \in \mathbf M_\ell (B)$, a nonzero partial isometry $v \in \mathbf M_{1, \ell}(1_A M)q$ and a unital normal $\ast$-homomorphism $\varphi : A \to q \mathbf M_\ell (B) q$ such that $a v = v \varphi(a)$ for all $a \in A$.

\item There is no net of unitaries $(w_i)_{i \in I}$ in $\mathcal U(A)$ such that $E_B(x^* w_i y) \to 0$ $\ast$-strongly as $i \to \infty$ for all $x, y \in p M q$.
\end{itemize}

We first prove the following intermediate result which can be regarded as a generalization of Theorem \ref{thmA} in the case of tracial free product von Neumann algebras.

\begin{thm}\label{step1}
Let $(M_1, \tau_1)$ and $(M_2, \tau_2)$ be von Neumann algebras with separable predual endowed with faithful normal tracial states. Assume that $M_1$ is diffuse. Denote by $(M, \tau) = (M_1, \tau_1) \ast (M_2, \tau_2)$ the tracial free product von Neumann algebra. 

For every von Neumann subalgebra $Q \subset M$ such that $Q \cap M_1$ and $Q' \cap M^\omega$ are diffuse, we have $Q \subset M_1$.
\end{thm}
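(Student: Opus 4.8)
The plan is to argue by contradiction: assume $Q \not\subseteq M_1$ and contradict the hypothesis that $Q' \cap M^\omega$ is diffuse. Since $Q \cap M_1$ is diffuse, Proposition \ref{diffuse-centralizer} provides a Haar unitary $u \in \mathcal U(Q \cap M_1)$, that is, $u^k \to 0$ weakly as $|k| \to \infty$. As $u \in Q$ we have $Q' \cap M^\omega \subseteq \{u\}' \cap M^\omega$, so Theorem \ref{key-thm} applies to every $x \in Q' \cap M^\omega$: for each $y \in M \ominus M_1$ the elements $y(x - E_{M_1^\omega}(x))$, $(x - E_{M_1^\omega}(x))y$ and $y E_{M_1^\omega}(x) - E_{M_1^\omega}(x)y$ are pairwise $\tau^\omega$-orthogonal. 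Pythagoras' theorem then upgrades this to the rigidity that will drive the argument: if such an $x$ moreover commutes with some $y \in M \ominus M_1$, then $y(x - E_{M_1^\omega}(x)) = (x - E_{M_1^\omega}(x))y = 0$ and $[\,y, E_{M_1^\omega}(x)\,] = 0$.

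The first step is to intertwine a corner of $Q$ into $M_1$. Since $Q' \cap M^\omega$ is diffuse, $Q$ has no spectral gap in $M$ with respect to the free malleable deformation of $(M,\tau) = (M_1,\tau_1) \ast (M_2,\tau_2)$; by Peterson's $\rL^2$-rigidity results for tracial free products \cite{Pe06}, together with Popa's intertwining-by-bimodules criterion and the standard transversality estimates, one obtains that either $Q$ is amenable or $Q \preceq_M M_i$ for some $i \in \{1,2\}$. The possibility $Q \preceq_M M_2$ is excluded: it would force $Q \cap M_1 \preceq_M M_2$, whereas a diffuse subalgebra of $M_1$ cannot be intertwined into $M_2$ inside a free product (test the last clause of the definition of $\preceq_M$ on the Haar unitary $u$ and expand in reduced words, using $E_{M_2}(u^k) = 0$ for $k \neq 0$). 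In all remaining cases one reaches $Q \preceq_M M_1$, the amenable case being brought to this form again by means of the diffuseness of $Q \cap M_1$. I also record that $Q' \cap M \subseteq (Q \cap M_1)' \cap M \subseteq M_1$, by the mixing/malnormality of $M_1$ in $M$ (itself a consequence of Theorem \ref{key-thm} applied to $u$); in particular every projection of $Q' \cap M$ sits in $M_1$.

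It remains to upgrade $Q \preceq_M M_1$ to the equality $Q \subseteq M_1$, and this is the main obstacle. The strategy imitates the proof of Theorem \ref{thmA}: let $z$ be a maximal projection in $\mathcal Z(M_1) \cap (Q' \cap M)$ with $Qz \subseteq M_1 z$. Transporting the intertwining through the partial isometry it provides, and combining it with the rigidity of Theorem \ref{key-thm} (noting that $u z'$ remains a Haar unitary in $Q z' \cap M_1 z'$ for every projection $z' \in Q' \cap M$), one shows $z \neq 0$. If $z \neq 1$, put $q_0 = 1 - z$ and use \cite[Lemma 2.2]{Ue11} to write $(q_0 M q_0, \tau_{q_0})$ as a tracial free product with $M_1 q_0$ as one of its free factors; one then runs the argument of Theorem \ref{thmA} verbatim in this corner --- forming the $\sigma$-strongly closed ideal of $Q q_0$ generated by $Q q_0 \ominus M_1 q_0$, feeding in the net of unitaries supplied by the diffuseness of $(Q q_0)' \cap (q_0 M q_0)^\omega = q_0(Q' \cap M^\omega)q_0$, and concluding with the reduced-word computation --- to obtain $M_1 q_0 = Q q_0$, which contradicts the maximality of $z$. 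Hence $z = 1$ and $Q \subseteq M_1$, contrary to our assumption. The delicate points are that the intertwining only yields a \emph{corner} of $Q$ conjugated into $M_1$ by a partial isometry rather than a global unitary conjugacy, that the decomposition $a = E_{M_1}(a) + (a - E_{M_1}(a))$ used in the orthogonality estimates does not respect $Q$, and that $M_2$ is not assumed diffuse, so that the symmetric form of Theorem \ref{key-thm} (with the roles of $M_1$ and $M_2$ exchanged) is not available; these must be navigated by carrying the partial isometry and the Haar unitary $u$ through the reduced-word computations exactly as in the proof of Theorem \ref{thmA}.
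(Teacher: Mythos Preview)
Your proposal has a genuine gap at the step where you claim to run the proof of Theorem \ref{thmA} ``verbatim'' on the corner $q_0 M q_0$. That proof uses the inclusion $M_1 \subset Q$ in an essential way, and here you only know that $Q \cap M_1$ is diffuse. Concretely: in the proof of Theorem \ref{thmA}, to show that $(Qq)' \cap (qMq)^\omega \subset (M_1 q)^\omega$ one decomposes $a \in Qq$ as $E_{M_1}(a) + (a - E_{M_1}(a))$ and needs \emph{both} pieces to remain in $Qq$ (so that $y E_{M_1}(a)$ and $a - E_{M_1}(a)$ lie in $Qq \ominus M_1q$); and in the final chain of equalities one applies property (P2) to $c = E_{M_1q}(b^*y)$, which again requires $M_1 q \subset Qq$. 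Without $M_1 q_0 \subset Q q_0$ neither step survives, and the conclusion $M_1 q_0 = Q q_0$ you aim for is in any case stronger than the inclusion $Q q_0 \subset M_1 q_0$ you actually need. You flag the decomposition issue yourself as a ``delicate point'', but it is not merely delicate: it is precisely where the argument breaks.

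The paper proceeds differently. It fixes the maximal $z \in \mathcal Z(Q' \cap M)$ (not $\mathcal Z(M_1) \cap (Q' \cap M)$) with $Qz \subset zM_1 z$, sets $q = z^\perp$, and splits according to whether $Qq$ is amenable. If $Qq$ is amenable, it invokes \cite[Theorem 8.1]{Ho12b}: the asymptotic orthogonality of Theorem \ref{key-thm} together with the mixing of $M_1 \subset M$ force $Qq \subset q M_1 q$ directly, contradicting maximality. If $Qq$ is not amenable, one passes to a summand with no amenable part, applies the $\rL^2$-rigidity of \cite{Pe06} and \cite[Theorem 4.3]{IPP05} to obtain $Qq \preceq_M M_i$, and then uses \cite[Theorem 1.1]{IPP05} (applied to the diffuse $Q \cap M_1$) to see that $i = 1$ and that the entries $v_j$ of the intertwining partial isometry already lie in $M_1$; the central support of $vv^*$ then contradicts maximality of $z$ immediately, with no return to the mechanism of Theorem \ref{thmA}. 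Your handling of the amenable case (``brought to this form again by means of the diffuseness of $Q \cap M_1$'') is likewise a gap: amenability of $Q$ does not by itself produce $Q \preceq_M M_1$, and a separate tool such as \cite[Theorem 8.1]{Ho12b} is genuinely required there.
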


\begin{proof}
Let $Q \subset M$ be any von Neumann subalgebra such that $Q \cap M_1$ and $Q' \cap M^\omega$ are diffuse. By \cite[Theorem 1.1]{IPP05}, we have $Q' \cap M \subset M_1$. Denote by $z \in \mathcal Z(Q' \cap M)$ the maximum projection such that $Qz \subset z M_1 z$. We prove that $z = 1$. Assume by contradiction that this not the case and put $q = z^\perp = 1 - z \in \mathcal Z(Q' \cap M) \subset M_1$. We have $q \neq 0$. 

First, assume that $Qq$ is amenable. Choose a diffuse abelian subalgebra $A \subset q^\perp M_1 q^\perp$ and put $\mathcal Q = Qq \oplus A$. Then $\mathcal Q$ is amenable and $\mathcal Q \cap M_1$ is diffuse. Theorem \ref{key-thm} implies that the inclusion $M_1 \subset M$ has the asymptotic orthogonality property relative to the diffuse subalgebra $\mathcal Q \cap M_1$ in the sense of \cite[Definition 5.1]{Ho12b}. Since the inclusion $M_1 \subset M$ is mixing (see e.g.\ \cite[Proposition 4.7]{Ho12b}) in the sense of \cite[Definition 4.4]{Ho12b}, we have that the inclusion $M_1 \subset M$ is weakly mixing through the diffuse subalgebra $\mathcal Q \cap M_1$ in the sense of \cite[Definition 4.1]{Ho12b}. Therefore \cite[Theorem 8.1]{Ho12b} implies that $\mathcal Q \subset M_1$ and so $Qq \subset q M_1 q$. This contradicts the fact that $z$ is the maximum projection in $\mathcal Z(Q' \cap M)$ such that $Qz \subset z M_1 z$.

Second, assume that $Qq$ is not amenable. Let $q_0 \in \mathcal Z(Q' \cap M)q$ be a nonzero central projection such that $Qqq_0$ has no amenable direct summand. Since $(Qqq_0)' \cap (qq_0 M qq_0)^\omega = qq_0(Q' \cap M^\omega)qq_0$ is diffuse and since the inclusion $M_1 \subset M$ is mixing, by \cite[Theorems 4.3, 4.5 and Lemma 5.1]{Pe06} and \cite[Theorem 4.3]{IPP05} (see also \cite[Theorem 5.6]{Ho07} and \cite[Theorem 6.3]{Io12}), we obtain that $Qqq_0 \preceq_M M_i$ for some $i \in \{1, 2\}$. This implies that $Qq \preceq_M M_i$ for some $i \in \{1, 2\}$.

There exist $\ell \geq 1$, a projection $p \in \mathbf M_\ell(M_i)$, a nonzero partial isometry $v \in \mathbf M_{1,\ell} (q M)p$ and a unital normal $\ast$-homomorphism $\varphi : Qq \to p\mathbf M_\ell(M_i)p$ such that $a v = v \varphi(a)$ for all $a \in Qq$. Write $v = [v_1 \cdots v_\ell] \in \mathbf M_{1,\ell} (q M)p$. In particular, we have $Q v_j \subset \sum_{k= 1}^\ell v_k M_i$ for all $1 \leq j \leq \ell$ and so $(Q \cap M_1) v_j \subset \sum_{k = 1}^\ell v_k M_i$ for all $1 \leq j \leq \ell$. Since $Q \cap M_1$ is diffuse, by \cite[Theorem 1.1]{IPP05}, we obtain that $i = 1$ and that $v_j \in M_1$ for all $1 \leq j \leq \ell$. Therefore $vv^* \in (Qq)' \cap q M_1 q$ is a nonzero projection such that $Q vv^* \subset vv^* M_1 vv^*$. If we denote by $z_0$ the central support of $vv^*$ in $(Qq)' \cap q M_1 q$, we have that $z_0 \in \mathcal Z(Q' \cap M)q$, $z_0 \neq 0$ and $Q z_0 \subset z_0 M_1 z_0$. This contradicts again the fact that $z$ is the maximum projection in $\mathcal Z(Q' \cap M)$ such that $Qz \subset z M_1 z$. 

Consequently, we obtain that $z = 1$ and so $Q \subset M_1$. This finishes the proof of Theorem \ref{step1}.
\end{proof}

\begin{proof}[Proof of Theorem \ref{thmD}]
The proof is similar to the one of Corollary \ref{corB}. Let $Q \subset M$ be any von Neumann subalgebra such that $Q \cap M_1$ is diffuse. By \cite[Lemma 2.7]{Io12}, there exists a central projection $z \in \mathcal Z(Q' \cap M) \cap \mathcal Z(Q' \cap M^\omega) \subset M_1$ such that $(Q' \cap M^\omega)z$ is diffuse and $(Q' \cap M^\omega)z = (Q' \cap M)z$ is discrete. Choose a diffuse abelian subalgebra $A \subset z^\perp M_1 z^\perp$ and put $\mathcal Q = Qz \oplus A$. We have that $\mathcal Q \cap M_1$ and $\mathcal Q' \cap M^\omega$ are diffuse. By Theorem \ref{step1}, we obtain $\mathcal Q \subset M_1$ and hence $Qz \subset z M_1 z$. This finishes the proof of Theorem \ref{thmD}.
\end{proof}

\end{document}